\newcommand\R{{\mathbb R}}
\begin{document}
	
	\newtheorem{example}{Example}[section]
	\newtheorem{lemma}{Lemma}[section]
	\newtheorem{thm}{Theorem}
	\newtheorem{prop}[lemma]{Proposition}
	\newtheorem{cor}{Corollary}[section]
	\newtheorem{conjecture}{Conjecture}[section]
	
	\theoremstyle{remark}
	\newtheorem{remark}{\textbf{Remark}}[section]

\title[Hyperbolic Restriction]{Restriction estimates for hyperbolic paraboloids in higher dimensions via bilinear estimates}
\author{Alex Barron}

\maketitle

\newcommand{\Addresses}{{
		\bigskip
		\footnotesize

		\textsc{Department of Mathematics, University of Illinois Urbana-Champaign,
			Urbana, IL 61801, USA}\par\nopagebreak
		\textit{E-mail address}: \texttt{aabarron@illinois.edu}

}}

\begin{abstract} Let $\mathbb{H}$ be a $(d-1)$-dimensional hyperbolic paraboloid in $\mathbb{R}^d$ and let $Ef$ be the Fourier extension operator associated to $\mathbb{H},$ with $f$ supported in $B^{d-1}(0,2)$. We prove that $\|Ef\|_{L^p (B(0,R))} \leq C_{\epsilon}R^{\epsilon}\|f\|_{L^p}$ for all $p \geq \frac{2(d+2)}{d}$ whenever $ \frac{d}{2} \geq m + 1$, where $m$ is the minimum between the number of positive and negative principal curvatures of $\mathbb{H}$. Bilinear restriction estimates for $\mathbb{H}$ proved by S. Lee and Vargas play an important role in our argument.  
\end{abstract}

\vspace{5mm}

In this paper we study estimates for the operator $$Ef(x,t) = \int_{\R^{d-1}} f(\xi) e^{2\pi i (x\cdot \xi + t(\xi_{1}^2 + ... + \xi_{d-m-1}^2 - \xi_{d-m}^2 - \xi_{d-m+1}^2 - ... - \xi_{d-1}^2 ) ) } d\xi,$$ $$\text{supp}(f) \subset B^{d-1}(0,2).$$ This is the extension operator associated to the hyperbolic paraboloid $$\mathbb{H} = \{\xi \in \R^d : \xi_d =\xi_{1}^2 + \xi_{2}^2 + ... + \xi_{d-m-1}^2 - \xi_{d-m}^2 - ... - \xi_{d-1}^2 \}.$$  We let $M$ denote the $(d-1)\times(d-1)$ diagonal matrix with $M_{ii} = 1$ if $i \leq d- 1 - m$ and $M_{ii} = -1$ if $i > d-1-m$. Then the phase of $Ef$ has the form $$x\cdot \xi + t(M\xi \cdot \xi), \ \ \ \xi \in \R^{d-1}.$$ We can assume that $m \leq \frac{d-1}{2}$ since otherwise we can replace $t$ by $-t$. Note that $m$ is the minimum between the number of positive and negative principal curvatures of $\mathbb{H}$.

We will prove the following. \begin{thm}\label{thm:restrictionThm} Fix $d \geq 4$. Suppose $f$ is supported in $B^{d-1}(0,2)$ and fix $R \geq 1$ and $\epsilon > 0$. If $m \leq \frac{d}{2}  - 1$ and $p \geq \frac{2(d + 2)}{d}$ then \begin{equation} \label{eq:mainEst} \|Ef\|_{L^{p}(B_R) } \leq C_{\epsilon} R^{\epsilon}\|f\|_{L^p}.\end{equation} 
\end{thm}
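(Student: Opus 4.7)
The plan is to execute the bilinear-to-linear reduction of Tao--Vargas--Vega, using the Lee--Vargas bilinear restriction theorem for $\mathbb{H}$ as the input. The hypothesis $m \le d/2 - 1$ is precisely the condition under which Lee and Vargas obtain the sharp bilinear estimate analogous to Tao's elliptic bilinear theorem, at the exponent needed to reach the Strichartz endpoint $p = 2(d+2)/d$.

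First I would apply a Whitney decomposition of $B^{d-1}(0,2) \times B^{d-1}(0,2)$ away from the diagonal. Writing $f = \sum_\tau f_\tau$ with $\tau$ ranging over dyadic $K^{-1}$-caps, this yields
\[
|Ef(x,t)|^2 = \sum_{K \ge 1} \sum_{(\tau,\tau') \in \mathcal{W}_K} Ef_\tau(x,t)\,\overline{Ef_{\tau'}(x,t)} \;+\; \text{(diagonal)},
\]
where $(\tau,\tau') \in \mathcal{W}_K$ indicates a pair of $K^{-1}$-caps at separation $\sim K^{-1}$. Parabolic rescaling brings each pair to unit scale, and the Lee--Vargas bilinear theorem yields an estimate of the shape
\[
\|Ef_\tau\,\overline{Ef_{\tau'}}\|_{L^{(d+2)/d}(B_R)} \le C_\epsilon R^\epsilon K^{\alpha} \|f_\tau\|_{L^2}\|f_{\tau'}\|_{L^2}
\]
with $\alpha$ an explicit exponent coming from the rescaling Jacobian.

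Next I would sum these bilinear estimates over Whitney pairs by the $L^2$-orthogonality argument of Tao--Vargas--Vega. The Fourier transforms of the products $Ef_\tau\,\overline{Ef_{\tau'}}$ concentrate in $O(K^{-1})$-neighborhoods of a fixed algebraic set, and those neighborhoods have bounded multiplicity of overlap. This allows one to replace the $L^{(d+2)/d}$ norm of the sum by an $\ell^2$-type sum of individual norms (after interpolation with the trivial $L^\infty$ bound), and a Hölder bookkeeping using that each $f_\tau$ is supported on a cap of size $K^{-1}$ then converts the $L^2$ norms on the right-hand side to $L^p$ norms at $p = 2(d+2)/d$. The geometric $K$-sum is absorbed into the $R^\epsilon$ loss, possibly via a standard induction-on-scales argument.

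The main obstacle is the summation/orthogonality step in the hyperbolic geometry. For the elliptic paraboloid, the near-disjointness of Fourier supports of the products $Ef_\tau \overline{Ef_{\tau'}}$ is essentially immediate from the definiteness of the second fundamental form; for $\mathbb{H}$, pairs of caps near the null cone of the form $M\xi \cdot \xi$ are only weakly transverse, and their contributions can cluster in frequency. The hypothesis $m \le d/2 - 1$ is the threshold that simultaneously (i) places us inside the range where the Lee--Vargas bilinear estimate is available and (ii) leaves enough positive curvature for the orthogonality bookkeeping to succeed at the Strichartz exponent. Carefully handling the caps close to null directions---likely via an additional decomposition that isolates and estimates them separately---is where the bulk of the technical work will lie.
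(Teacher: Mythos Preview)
Your proposal has a genuine gap, and it stems from a misreading of where the hypothesis $m \le d/2 - 1$ actually enters.

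First, the Lee--Vargas bilinear theorem (Theorem~\ref{thm:bilinear0} in the paper) holds for \emph{every} $d \ge 3$ and every $m$; the condition $m \le d/2 - 1$ plays no role in its validity. So your claim that this hypothesis ``is precisely the condition under which Lee and Vargas obtain the sharp bilinear estimate'' is incorrect. Second, and more seriously, the transversality hypothesis in the hyperbolic bilinear theorem is \emph{not} mere distance separation: it is the quantitative lower bound $|M(\xi - \eta)\cdot(\bar\xi - \bar\eta)| \gtrsim 1$. A Whitney pair of caps at mutual distance $\sim K^{-1}$ need not satisfy this at all --- in fact the paper explains in Section~2.1 that in $d \ge 4$ the failure set is a neighborhood of an entire cone $\mathcal{C}$, not just a line, and that for this reason the TVV Whitney reduction ``breaks down'' and is ``no longer well-adapted.'' So your very first step, applying the bilinear estimate to each Whitney pair after rescaling, is not justified; for many Whitney pairs the estimate is simply false (the paper notes explicitly that \eqref{eq:bilnearEst0} can fail when \eqref{eq:bilinearCondition0} fails).

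The paper's actual route is quite different. It runs a Bourgain--Guth broad--narrow decomposition at scale $K^{-1}$. The key new ingredient is the geometric Lemma~\ref{lem:caseLemma}: if \emph{no} pair of significant caps on a $K^2$-cube is strongly separated in the sense of \eqref{eq:bilinearCondition}, then all significant caps must lie in an $O(K^{-\alpha})$-neighborhood of an $m$-dimensional affine subspace. Broad cubes are then handled by the bilinear theorem; narrow cubes are handled by flat decoupling over $\sim K_1^m$ caps followed by parabolic rescaling and induction on scales. The hypothesis $m \le d/2 - 1$ enters \emph{only} here, as the arithmetic condition \eqref{eq:induction} needed to close that induction. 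Your proposal identifies neither the structural dichotomy of Lemma~\ref{lem:caseLemma} nor the narrow-case induction where the dimensional restriction is actually used; the vague ``additional decomposition'' you allude to for null-direction caps is precisely the missing idea.
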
 \noindent When $d$ is even this result follows from work of Bourgain--Guth (\cite{BG}, see Remark \ref{rmk:BG}). By Tao's $\epsilon$-removal argument (\cite{T2}) the theorem holds for $p > \frac{2(d+2)}{d}$ with no loss of $R^{\epsilon}.$ 

There is an alternative proof of Theorem \ref{thm:restrictionThm} that can be found in the recent paper \cite{HiI} by Hickman and Iliopoulou, which is discussed further below. Indeed, in the cases where $m < \lceil \frac{d-3}{2}\rceil $ the estimates in \cite{HiI}, which generalize the polynomial partitioning method of \cite{G} and \cite{GHI}, are stronger than Theorem \ref{thm:restrictionThm}. The main novelty of our approach will be the use of bilinear estimates of S. Lee and Vargas (see below) and an elementary orthogonality estimate in place of $k$-linear restriction estimates \cite{BCT} and the $\ell^2$ decoupling theorem of Bourgain and Demeter \cite{BD2}. This gives a somewhat more elementary proof of Theorem \ref{thm:restrictionThm}, which matches the best known restriction estimate for the hyperbolic paraboloid in the case where $d$ is odd and $m \geq \frac{d-1}{2} - 1$, and where $d$ is even and $m = \frac{d}{2} -1$. Moreover, it was previously unknown if the bilinear estimates we use implied linear estimates in dimension $d \geq 4$. Some of the geometric observations we take advantage of in our proof may also be useful for future work on restriction estimates for $\mathbb{H}$, in particular in the signature 0 case where $d$ is odd and $m = \frac{d-1}{2}$. In this case the Stein--Tomas theorem is the best known restriction estimate for $\mathbb{H}$.      

\subsection*{Related results in the literature} In the case $d = 3, m = 1$ Theorem 1 was proved independently by Vargas (\cite{V}) and S. Lee (\cite{L}) using a bilinear method. This was later improved by Cho and J. Lee (\cite{CL}), who adapted the polynomial partitioning method developed by Guth in \cite{G1} to show that \eqref{eq:mainEst} holds for $p > 3.25$. In \cite{S} Stovall proved certain endpoint cases when $d = 3$ that do not follow from arguments in \cite{L} and \cite{V}. See also the paper \cite{K} by Kim. For other recent progress on restriction estimates for perturbations of the hyperbolic paraboloid in dimension 3 see the recent papers of  Buschenhenke-M\"{u}ller-Vargas \cite{BMV} and Guo-Oh \cite{GO}. 

When $d \geq 4$ the bilinear-to-linear reduction applied by Vargas and S. Lee breaks down for reasons we discuss further in Section 2.1. Improved restriction estimates also do not follow immediately from the techniques established by Guth in \cite{G} to study elliptic paraboloids in higher dimensions. Indeed, the transverse equidistribution estimates that play a crucial role in Guth's argument can fail for hyperbolic paraboloids in certain cases (see Example 8.8 in \cite{GHI}). For related reasons the Bourgain--Guth method developed in \cite{BG} also does not easily adapt to hyperbolic paraboloids in the case where $d \geq 5$ is odd, although if $d$ is even then the estimate in Theorem \ref{thm:restrictionThm} follows from their more general estimates for H\"{o}rmander-type operators  (see Remark \ref{rmk:BG} at the end of Section 3 below).

As mentioned above, Hickman and Iliopoulou \cite{HiI} have recently extended the polynomial partitioning method developed by Guth in \cite{G} and Guth, Hickman, and Iliopoulou in \cite{GHI} to the hyperbolic case. The key new ingredient is the introduction of certain weakened transverse equidistribution estimates. These estimates, which describe the extent to which $Ef$ can concentrate along the neighborhood of a lower-dimensional variety, get worse as the parameter $m$ increases but are still strong enough to obtain improved restriction bounds when $m$ is not too large.

Certain sharp fractal estimates for $Ef$ have also been recently obtained by the author, Erdo\u{g}an, and Harris in \cite{BEH}. These estimates extend the fractal restriction argument of Du and Zhang \cite{DZ} to the hyperbolic case. A weighted version of the bilinear argument in this paper plays a key role in the proof of some of the sharp results in the main theorem in \cite{BEH}.   

\subsection*{Overview of the paper}The main goal for this paper is to prove Theorem \ref{thm:restrictionThm} using the bilinear restriction estimates proved by S. Lee and Vargas, stated precisely in Theorem \ref{thm:bilinear0} in Section 2 below. It was previously unknown if these bilinear estimates could be used to prove linear estimates in the range $p\geq \frac{2(d+2)}{d}$, due to a number of geometric obstructions that arise when trying to apply the usual bilinear-to-linear method in dimension $d \geq 4$. Our argument will follow a broad-narrow scheme adapted from \cite{BG}, \cite{DZ}, \cite{G}. This broad-narrow analysis allows us to use the estimates of S. Lee and Vargas except in certain exceptional cases which we analyze in Section 2. The main idea is the following: if $\tau_1$ and $\tau_2$ are two caps in the support of $f$ and we do not have a favorable estimate for $Ef_{\tau_1}Ef_{\tau_2},$ then $\tau_1$ and $\tau_2$ must be arranged in a neighborhood of a hyperbolic cone $\mathcal{C}_{m}$. If we can find no pairs $(\tau_1, \tau_2)$ for which bilinear estimates apply then the geometry of $\mathcal{C}_{m}$ forces the caps to in fact be contained in a neighborhood of an $m$-dimensional plane; we can then treat this scenario using a `narrow'  flat decoupling argument and induction on the scale, at least when $m \leq \frac{d}{2} - 1$. In the special case where $d$ is odd and $m = \frac{d-1}{2}$ our method breaks down since the induction no longer closes. Note however that we always have $m \leq \frac{d-1}{2}$.

We review some basic tools that we will use frequently in Section 1. In Section 2 we discuss some more history and background surrounding bilinear restriction estimates. The key lemma describing how bilinear estimates for $Ef$ can fail is then proved in Section 2.3. Finally, in Section 3 we carry out the broad-narrow argument to complete the proof of Theorem 1. Some remarks about the failure of our argument in the case where $d$ is odd and $m = \frac{d-1}{2}$ can be found at the end of Section 3. 

\subsection*{Notation} We will write $A \lesssim B$ if there is some constant $c > 0$ depending only on the dimension and various Lebesgue exponents such that $A \leq cB$. If $A \lesssim B$ and $B \lesssim A$ we also write $A \sim B$. Our uniform constants may also vary from line-to-line, which is allowed since they will remain independent of $R$. We will also write $A \lessapprox B$ to signify that for each $\epsilon > 0$ there is some $C(\epsilon)$ such that $A \leq C(\epsilon)R^{\epsilon} B$. Finally, we will write $A \ll B$ if $A/B \rightarrow 0$ as $R\rightarrow \infty$. 

Let $B_{r}$ be a ball of radius $r$ in $\R^d$ and let $B_{r^{-1}}$ denote a ball centered at the origin in $\R^d$ of radius $r^{-1}$. We let $w_{B_r}$ be a smooth weight adapted to $B_r$ in the following sense: $w_{B_r}(x,t)$ decays rapidly for $(x,t) \notin B_r$, and $\widehat{w_{B_r}}$ is supported in a fixed dilate of $B_{r^{-1}}$. Note that we can construct $w_{B_r}$ by taking a bump function $w$ adapted to the unit ball such that $$|w(x)| \lesssim \frac{1}{(1 + |x|)^{1000d}}$$ and then applying a suitable affine transformation.

If $S$ is a ball or rectangle in $\R^{d-1}$ we let $f_{S} = f\cdot \phi_{S},$ where $\phi_{S}$ is a smooth bump function supported in a small dilate of $S$ with $\phi_{S} (\xi) = 1$ when $\xi \in S$. If $\mathcal{M}$ is a smooth manifold and $\rho > 0$ we will let $N_{\rho}(\mathcal{M})$ denote the $\rho$-neighborhood of $\mathcal{M}$. 

\subsection*{Acknowledgments} This paper benefited from several helpful conversations with M. Burak Erdo\u{g}an and Terence Harris. The author also thanks the anonymous referees for helpful comments which have improved the paper.  

\section{Basic tools}

In this section we review some basic tools we will use throughout the proof of Theorem \ref{thm:restrictionThm}. Below we will always assume that the support of $f$ is contained in $B^{d-1}(0,2)$. 

\subsection{Wave packet decomposition and parabolic rescaling} We first recall the standard wave packet decomposition for $Ef$ (see for example \cite{CL}, \cite{G}, \cite{L}, or \cite{V}). Fix $\rho\in (0,1)$ and suppose $\{\tau \}$ is a collection of finitely-overlapping balls of radius $\rho$ that cover the support of $f$. We will refer to these $\tau$ as \textit{$\rho$-caps}. Using a partition of unity we may decompose $f = \sum_{\tau} f_{\tau}$, with $f_{\tau}$ supported in a small dilate of $\tau$. Then $Ef = \sum_{\tau}Ef_{\tau}$. We let $$G(\tau) := \frac{(2\xi_1, ...,2\xi_{d-m-1}, -2\xi_{d-m},...,-2\xi_{d-1}, -1)}{|(2\xi_1,...,2\xi_{d-m-1}, -2\xi_{d-m},...,-2\xi_{d-1}, -1)|}$$ when $\xi$ is the center of $\tau$, so $G(\tau)$ is the unit normal direction to $\mathbb{H}$ above the center of $\tau$. If $T_{\tau}$ is any tube in $\R^d$ of dimensions $$\rho^{-1}\times...\times \rho^{-1} \times \rho^{-2}$$ with long direction $G(\tau)$ then $Ef_{\tau}$ is essentially constant on $T_{\tau}$. 

We also recall that $Ef$ is invariant under parabolic rescalings in the following way.  

\begin{prop}\label{prop:parabolicRescaling} Fix $R > 1$ and let $B_R = B^{d}(0,R)$. Also fix $\rho \in (0,1)$ with $\rho^{-1} < R.$ Then for any $\rho$-cap $\tau$ one can find a function $g$ supported in $B^{d-1}(0,2)$ such that $$\|g\|_{L^{p}} = \rho^{-\frac{d-1}{p}}\|f_{\tau}\|_{L^{p}} $$ and $$\|Ef_{\tau}\|_{L^{p}(B_R)} \leq \rho^{(d-1) - \frac{d+1}{p} }\|Eg\|_{L^{p}(B_{\rho R})}.$$ 
\end{prop}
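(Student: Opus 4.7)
The plan is the standard parabolic-rescaling argument, adapted verbatim from the elliptic paraboloid case; because the quadratic form in the phase of $Ef$ is $M\xi\cdot\xi$ with $M$ a diagonal matrix of $\pm 1$'s, the argument never uses ellipticity of this form and transports to the hyperbolic case unchanged. Concretely, I would write any $\rho$-cap $\tau$ as $\tau = \xi_0 + \rho \cdot B^{d-1}(0,1)$ (up to a harmless constant in the radius), perform the change of variables $\xi = \xi_0 + \rho \eta$ in the $\xi$-integral defining $Ef_\tau$, and expand
\[
x\cdot \xi + t(M\xi\cdot \xi) = \bigl[ x\cdot \xi_0 + t(M\xi_0\cdot \xi_0)\bigr] + \rho(x + 2tM\xi_0)\cdot \eta + \rho^2 t(M\eta\cdot \eta).
\]
The bracketed term is independent of $\eta$, so it contributes only a unimodular factor; the remaining terms have exactly the hyperboloid phase in the variables
\[
\tilde{x} := \rho x + 2\rho t M\xi_0, \qquad \tilde{t} := \rho^2 t.
\]

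Next I would set $g(\eta) := f_\tau(\xi_0 + \rho\eta)$, which is supported in $B^{d-1}(0,2)$ (using the hypothesis $|\xi_0|\le 2$ and $\rho < 1$, possibly after absorbing a constant into the definition of the cap). The computation above gives the pointwise identity
\[
|Ef_\tau(x,t)| = \rho^{d-1} \, |Eg(\tilde{x},\tilde{t})|.
\]
The $L^p$ identity on $f_\tau$ side is then immediate: the change of variables $\xi = \xi_0 + \rho\eta$ yields $\|g\|_{L^p} = \rho^{-(d-1)/p}\|f_\tau\|_{L^p}$, which is exactly the first claim.

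For the spatial integral, I would compute the Jacobian of the linear map $(x,t)\mapsto(\tilde{x},\tilde{t})$: it is block lower-triangular with diagonal blocks $\rho I_{d-1}$ and $\rho^2$, so $d\tilde{x}\, d\tilde{t} = \rho^{d+1}\, dx\, dt$. One then has to check that $(x,t)\in B_R$ implies $(\tilde{x},\tilde{t})$ lies in a ball of radius $\lesssim \rho R$: indeed, $|\tilde{t}| \le \rho^2 R \le \rho R$ and, since $|M\xi_0| \le 2$, $|\tilde{x}|\le \rho R + 4\rho R = 5\rho R$. The factor of $5$ can be absorbed into the notation $B_{\rho R}$ (or handled by a trivial covering of the enlarged ball by balls of radius $\rho R$, at the cost of an $O(1)$ constant allowed by our $\lesssim$ convention). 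Combining the pointwise identity with this change of variables produces
\[
\|Ef_\tau\|_{L^p(B_R)}^p = \rho^{p(d-1)} \rho^{-(d+1)} \|Eg\|_{L^p(B_{\rho R})}^p,
\]
and taking $p$-th roots yields the stated bound with exponent $(d-1) - (d+1)/p$.

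There is no real obstacle here; the only minor point to get right is the bookkeeping of the two Jacobians (the $\xi$-side gives the $\rho^{-(d-1)/p}$ in the $f_\tau$ identity, while the $(x,t)$-side gives $\rho^{-(d+1)/p}$ combined with the $\rho^{d-1}$ amplitude factor to produce $\rho^{(d-1)-(d+1)/p}$). The argument does not see the signature of $M$ at all, which is the reason the hyperboloid enjoys the same rescaling symmetry as the paraboloid; this is what lets us use wave-packet and induction-on-scales arguments later in the paper in essentially the same way as in the elliptic setting.
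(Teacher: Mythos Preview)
Your argument is correct and is exactly the approach the paper sketches: the paper merely says to use modulation invariance to recenter $\tau$ at the origin and then rescale $(x,t)\to(\rho^{-1}\bar x,\rho^{-2}\bar t)$, and your explicit change of variables carries this out in one step (the bracketed unimodular factor is precisely the modulation). One cosmetic slip: the Jacobian matrix you describe is block \emph{upper}-triangular rather than lower-triangular, but of course this does not affect the determinant.
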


\noindent To prove the proposition one can use modulation invariance of $Ef_{\tau}$ to reduce to the case where $\tau$ is centered at the origin, and then rescale $(x,t) \rightarrow (\rho^{-1} \bar{x}, \rho^{-2} \bar{t}).$ 

The operator $Ef$ has other scaling symmetries that differ from parabolic rescaling, although we will make no use of these symmetries in our arguments. Note, however, that the proof of Theorem \ref{thm:restrictionThm} in the case $d= 3$ due to S. Lee and Vargas (\cite{L}, \cite{V}) does exploit these extra symmetries. The same is also true of the Bourgain--Guth proof of the case $d = 3$ (see Remark \ref{rmk:BG} below), along with the improved estimate when $d = 3$ due to Cho and J. Lee in \cite{CL}.

\subsection{Flat decoupling and induction on scales} Decoupling allows us to separate the contribution from different wave packets $Ef_{\tau}$. This is useful in the `narrow case' below when we cannot use bilinear restriction estimates. The strongest possible decoupling result for the hyperbolic paraboloid has been proved by Bourgain and Demeter (\cite{BD2}), though we will not need to use their theorem. Instead it will suffice to use the following more elementary `flat decoupling' result, which follows easily from orthogonality considerations. 

\begin{prop}[Flat Decoupling] \label{prop:decoupling} Suppose $\mathcal{T}$ is a collection of finitely-overlapping $\rho$-caps $\tau$ with $\rho^{-1} < R$. Then $$\| \sum_{\tau \in \mathcal{T} } Ef_{\tau}\|_{L^{p}(B_R)} \leq C(\# \mathcal{T})^{\frac{1}{2} - \frac{1}{p}}  \big( \sum_{\tau \in \mathcal{T} } \|Ef_{\tau}\|^{2}_{L^{p} (w_{B_R}) } \big)^{\frac{1}{2}},$$ where $w_{B_R}$ is a smooth weight adapted to $B_R$. 
\end{prop}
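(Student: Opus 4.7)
The plan is to combine an $L^{2}$ orthogonality estimate with a trivial $L^{\infty}$ triangle-inequality bound and interpolate. The key structural fact is that multiplying by $w_{B_R}$ smears Fourier supports only on the scale $R^{-1}$, which is smaller than the cap scale $\rho$; thus the Fourier supports of the pieces $Ef_\tau \cdot w_{B_R}$ remain essentially disjoint.

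First I would record the Fourier-support observation. Since $\widehat{w_{B_R}}$ is supported in a ball of radius $\lesssim R^{-1}$ and $\widehat{Ef_\tau}$ is a measure supported on the image of $\tau$ in $\mathbb{H}$, the function $Ef_\tau \cdot w_{B_R}$ has Fourier transform supported in the $R^{-1}$-neighborhood of that piece of $\mathbb{H}$. Because $R^{-1} < \rho$ and the $\rho$-caps $\tau$ are finitely overlapping, these thickened pieces of $\mathbb{H}$ are also finitely overlapping in $\mathbb{R}^d$. By Plancherel this yields the $p=2$ case directly:
\[
\bigl\|\textstyle\sum_{\tau}Ef_{\tau}\bigr\|_{L^{2}(B_R)}^{2}
 \lesssim \bigl\|\textstyle\sum_{\tau}(Ef_{\tau})\,w_{B_R}\bigr\|_{L^{2}(\mathbb{R}^d)}^{2}
 \lesssim \sum_{\tau}\|Ef_{\tau}\|_{L^{2}(w_{B_R})}^{2},
\]
and the claim for $p=2$ follows since $(\#\mathcal{T})^{1/2-1/2}=1$. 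For $p=\infty$, the triangle inequality followed by Cauchy--Schwarz in the $\tau$-index gives
\[
\bigl\|\textstyle\sum_{\tau}Ef_{\tau}\bigr\|_{L^{\infty}(B_R)}
 \leq \textstyle\sum_{\tau}\|Ef_{\tau}\|_{L^{\infty}(B_R)}
 \leq (\#\mathcal{T})^{1/2}\bigl(\sum_{\tau}\|Ef_{\tau}\|_{L^{\infty}(w_{B_R})}^{2}\bigr)^{1/2},
\]
matching the claim at $p=\infty$.

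For intermediate $p\in(2,\infty)$, I would apply Riesz--Thorin to the linear operator $T\colon (c_\tau)\mapsto \sum_\tau c_\tau\, Ef_\tau$. After renormalizing each $Ef_\tau$ by its $L^{p}(w_{B_R})$ norm so that the target space has a uniform weight, the $L^{2}$ bound above serves as the $\ell^{2}\to L^{2}$ endpoint and the $L^{\infty}$ bound serves as the $\ell^{2}\to L^{\infty}$ endpoint. The interpolated $\ell^{2}\to L^{p}$ operator norm is $\lesssim (\#\mathcal{T})^{1/2-1/p}$; evaluating at $c_\tau\equiv 1$ and unwinding the normalization gives the proposition.

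The main obstacle is in the last step: arranging the interpolation so that the right-hand side is the clean $\ell^{2}$ sum of $L^{p}(w_{B_R})$ norms rather than a hybrid $L^{2}$/$L^{\infty}$ expression. The cleanest way I know to handle this is to interpolate the operator $T$ between the two weighted-endpoint spaces (essentially a Riesz--Thorin argument on the pair $(\ell^{2}(\mathcal{T}),L^{p})$ indexed by the exponent $p$); alternatively, one can bypass the operator formalism and just combine the log-convex inequality $\|F\|_p\leq \|F\|_2^{2/p}\|F\|_\infty^{1-2/p}$ with the $L^2$ and $L^\infty$ estimates above, provided one is willing to absorb a Hölder factor on the $\tau$-sum side. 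Either way, the orthogonality from Step~1 and the cap-counting factor $(\#\mathcal{T})^{1/2}$ from the $L^\infty$ bound combine to produce the desired exponent $1/2-1/p$.
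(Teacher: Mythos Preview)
Your proof is correct and mirrors the paper's own argument exactly: the case $p=2$ is Plancherel (almost-orthogonality of the Fourier supports after multiplication by $w_{B_R}$), the case $p=\infty$ is Cauchy--Schwarz, and intermediate $p$ follow by interpolation. The interpolation subtlety you flag is real but standard, and the paper does not spell it out either; the cleanest resolution is to localize to $\rho^{-1}$-balls, on which each $Ef_\tau$ is essentially constant (so the $L^2$, $L^p$, and $L^\infty$ norms agree up to volume factors), prove the inequality on each such ball, and then sum in $\ell^p$ over the balls using Minkowski.
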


\begin{proof} The case $p = \infty$ is just the Cauchy-Schwarz inequality, and when $p =2$ the proposition follows from Plancharel's theorem. The remaining cases follow by interpolation. 
\end{proof}

Finally we recall that if $R$ is small enough then Theorem \ref{thm:restrictionThm} follows directly from H\"{o}lder's inequality. We can therefore assume by induction that Theorem \ref{thm:restrictionThm} is true at scale $\rho R$ whenever $\rho \ll 1$. For technical reasons related to the decoupling result in Proposition \ref{prop:decoupling} we also remark that we can assume by induction that the following weighted estimate holds: for any $\epsilon > 0 $ $$\|Ef\|_{L^p (w_{B_{\rho R}} )} \leq C_{\epsilon}(\rho R)^{\epsilon}\|f\|_{L^{p}},$$ where $w_{B_{\rho R}}$ is a smooth weight adapted to $B_{\rho R}.$ 

\section{Bilinear restriction estimates for $\mathbb{H}$} In this section we will review some known bilinear estimates and prove a lemma that characterizes what happens if these bilinear estimates fail. The following estimate was proved by S. Lee in dimension $d \geq 3$ (\cite{L}) and independently proved by Vargas in dimension 3 (\cite{V}).

\begin{thm}[\cite{L}, \cite{V}]\label{thm:bilinear0}  Suppose $f_1$ and $f_2$ are supported in open sets $\tau_1$ and $\tau_2$ of diameter $\sim 1$.  If \begin{equation}\label{eq:bilinearCondition0} \inf_{ \substack{ \xi, \bar{\xi} \in \tau_1 \\ \eta, \bar{\eta} \in \tau_2 } } | M(\xi - \eta) \cdot (\bar{\xi} - \bar{\eta})| \geq c > 0\end{equation} then \begin{equation}\label{eq:bilnearEst0} \| |Ef_1 Ef_{2}|^{\frac{1}{2}} \|_{L^{p}(B_R)} \leq C_{\epsilon}R^{\epsilon} \|f_1\|^{\frac{1}{2}}_{L^2} \|f_2\|_{L^2}^{\frac{1}{2}} \end{equation} whenever $p \geq \frac{2(d+2)}{d}$. If \eqref{eq:bilinearCondition0} fails then \eqref{eq:bilnearEst0} can fail as well for all $p \geq \frac{2(d+2)}{d}$.  
\end{thm}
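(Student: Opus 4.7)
The plan is to adapt Tao's sharp bilinear restriction theorem for the paraboloid to the hyperboloid, with \eqref{eq:bilinearCondition0} playing the role of the standard transversality $|\xi - \eta| \gtrsim 1$ used in the paraboloid case. The starting point is the bilinear $L^2$ bound $\|Ef_1 Ef_2\|_{L^2(\R^d)} \lesssim \|f_1\|_{L^2}\|f_2\|_{L^2}$. By Plancherel this reduces to bounding the convolution $(f_1\,d\sigma)*(f_2\,d\sigma)$ of surface measures on $\mathbb{H}$ restricted to $\tau_1,\tau_2$. Writing $F(\xi,\eta) = (\xi+\eta, M\xi\cdot\xi + M\eta\cdot\eta)$, a direct computation shows that the co-area Jacobian $\sqrt{\det(DF\cdot DF^T)}$ is proportional to $|M(\xi-\eta)|$, which for the diagonal $\pm 1$ matrix $M$ equals $|\xi-\eta|$; the bilinear $L^2$ estimate therefore follows as soon as the caps are separated, a weak consequence of \eqref{eq:bilinearCondition0}.

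To reach the sharp endpoint $p = 2(d+2)/d$ one deploys Tao's wave-packet / induction-on-scales scheme. Decompose each $f_i$ into caps of radius $R^{-1/2}$ and write $Ef_i = \sum_{T_i} F_{T_i}$ as a sum over wave packets concentrated on tubes $T_i$ of dimensions $R^{1/2}\times\cdots\times R^{1/2}\times R$ oriented along the normals $G(\theta)$ to $\mathbb{H}$ as in Section 1.1. Proceeding by induction on the scale $R$, one partitions $B_R$ into balls of some intermediate radius $R' < R$, applies the induction hypothesis inside each, and consolidates using the bilinear $L^2$ bound above together with an $L^2$-orthogonality step. The full strength of \eqref{eq:bilinearCondition0} enters precisely at the geometric step controlling how wave packets from $\tau_1$ and $\tau_2$ can collectively concentrate: for caps of diameter $\sim 1$, the infimum in \eqref{eq:bilinearCondition0} is essentially $|M\delta_0\cdot\delta_0|$ where $\delta_0$ is the difference of centers, so the condition prevents $\delta_0$ from lying in or near the $M$-null cone. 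This non-null separation, automatic for the paraboloid but nontrivial when $M$ is indefinite, rules out the Knapp-type concentrations that would otherwise defeat the bilinear $L^p$ bound.

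The main obstacle is verifying the geometric wave-packet estimates (angle lower bounds, plate counting, tube intersection volumes) for an indefinite $M$: one has to check at each stage of the induction that the only configurations arising are the ``good'' ones controlled by \eqref{eq:bilinearCondition0}. Once these are in hand, the analytic parts (wave packet orthogonality, parabolic rescaling of Proposition \ref{prop:parabolicRescaling}) carry over from the paraboloid case with essentially no change. Finally, the sharpness statement — failure of \eqref{eq:bilnearEst0} when \eqref{eq:bilinearCondition0} fails — follows from a Knapp-type construction: if $\tau_1,\tau_2$ can be chosen so that their centers differ by an $M$-null vector, then $\mathbb{H}$ has a relevant linear direction in its sumset $\mathbb{H}+\mathbb{H}$, and one can exhibit $f_1,f_2$ for which $Ef_1 Ef_2$ concentrates on a slab of the wrong dimensions to be compatible with \eqref{eq:bilnearEst0}.
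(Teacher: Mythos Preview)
The paper does not contain a proof of Theorem~\ref{thm:bilinear0}: it is quoted as a result of S.~Lee \cite{L} (all $d\ge 3$) and Vargas \cite{V} ($d=3$) and used as a black box. The only related argument in the paper is Section~2.2, which deduces the rescaled Theorem~\ref{thm:bilinear} from Theorem~\ref{thm:bilinear0}; so there is no in-paper proof to compare your proposal against.

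As a sketch of what Lee actually does in \cite{L}, your outline is broadly correct: one runs Tao's wave-packet induction-on-scales scheme from \cite{T}, with \eqref{eq:bilinearCondition0} replacing the elliptic separation $|\xi-\eta|\gtrsim 1$, and the bilinear $L^2$ estimate serving as the base. One correction worth flagging: for caps of diameter $\sim 1$ the four-point infimum in \eqref{eq:bilinearCondition0} is \emph{not} essentially $|M\delta_0\cdot\delta_0|$ for $\delta_0$ the difference of centers, since perturbing each of the four points within a unit-scale cap introduces $O(1)$ errors. The full four-point condition is what is actually needed at the tube-intersection and Kakeya-type steps of the induction; reducing it to a center condition would leave a genuine gap precisely at the geometric stage you identify as the main obstacle.
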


\noindent We will need to use a version of Theorem \ref{thm:bilinear0} adapted to $K^{-1}$-caps for a parameter $K$ such that $$1 \ll K \ll R.$$ The following is a consequence of Theorem \ref{thm:bilinear0}.

\begin{thm}\label{thm:bilinear}  Suppose $f_1$ and $f_2$ are supported in $K^{-1}$-caps $\tau_1$ and $\tau_2$, respectively, whose centers are separated by $CK^{-1}$, with $C$ a sufficiently large but uniform constant. Let $A$ be a constant. If \begin{equation}\label{eq:bilinearCondition} \inf_{ \substack{ \xi, \bar{\xi} \in \tau_1 \\ \eta, \bar{\eta} \in \tau_2 } } | M(\xi - \eta) \cdot (\bar{\xi} - \bar{\eta})| \geq AK^{-1}\end{equation} then \begin{equation}\label{eq:bilnearEst} \| |Ef_1 Ef_{2}|^{\frac{1}{2}} \|_{L^{p}(B_R)} \leq C_A K^{O(1)} \|f_1\|^{\frac{1}{2}}_{L^2} \|f_2\|_{L^2}^{\frac{1}{2}} \end{equation} whenever $p \geq \frac{2(d+2)}{d}$. If \eqref{eq:bilinearCondition} fails then \eqref{eq:bilnearEst} can fail as well for all $p \geq \frac{2(d+2)}{d}$.
\end{thm}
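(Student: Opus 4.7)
The proof is a standard parabolic-rescaling reduction to the unit-scale statement Theorem~\ref{thm:bilinear0}. The only structural input I would use is that the phase $x\cdot\xi+t(M\xi\cdot\xi)$ is homogeneous under the anisotropic dilation $(\xi,x,t)\mapsto(K^{-1}\xi,Kx,K^{2}t)$, so that frequency caps at scale $K^{-1}$ rescale to unit scale and the bilinear $M$-separation condition in \eqref{eq:bilinearCondition} rescales in a quantitatively matching way.

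First I would use the modulation identity $|E(f(\cdot+\xi_{0}))(x,t)|=|Ef(x-2tM\xi_{0},t)|$ (a direct computation using that $M$ is symmetric). Applying this to both $f_{1}$ and $f_{2}$ with $\xi_{0}$ equal to the center of $\tau_{1}$ recenters the problem so that $\tau_{1}$ sits at the origin and $\tau_{2}$ is a $K^{-1}$-cap whose center has norm $\sim K^{-1}$. Since $|\xi_{0}|\lesssim 1$, the shear $x\mapsto x-2tM\xi_{0}$ carries $B_{R}$ into a set contained in a ball of comparable radius, so the left-hand side of \eqref{eq:bilnearEst} is unchanged up to absolute constants.

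Next I would rescale in frequency: set $\tilde f_{i}(\eta):=f_{i}(K^{-1}\eta)$, which is supported in a unit-diameter cap. A direct computation gives
\[
Ef_{i}(x,t)=K^{-(d-1)}E\tilde f_{i}(K^{-1}x,K^{-2}t),
\]
so the change of variables $(\bar{x},\bar{t})=(K^{-1}x,K^{-2}t)$ introduces a Jacobian $K^{d+1}$ while sending $B_{R}$ into a region contained in $B_{K^{-1}R}$. Simultaneously, substituting $\xi=K^{-1}\eta$ (and similarly on the $\tau_2$ side) in the hypothesis \eqref{eq:bilinearCondition} turns it into the unit-scale hypothesis \eqref{eq:bilinearCondition0} with constant $\gtrsim A$. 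Theorem~\ref{thm:bilinear0} then applies to $(\tilde f_{1},\tilde f_{2})$ on $B_{K^{-1}R}$, and converting back using $\|\tilde f_{i}\|_{L^{2}}=K^{(d-1)/2}\|f_{i}\|_{L^{2}}$ assembles into an overall factor of $K$ whose exponent depends only on $d$ and $p$ --- this is the claimed $K^{O(1)}$.

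The sharpness claim is the reverse of the same rescaling: a hypothetical failure of \eqref{eq:bilnearEst} for some $(f_{1},f_{2})$ with \eqref{eq:bilinearCondition} violated pulls back under the above change of variables to a failure of \eqref{eq:bilnearEst0} under a violation of \eqref{eq:bilinearCondition0}, which is precisely the sharpness part of Theorem~\ref{thm:bilinear0}. I do not anticipate any real obstacle --- the main task is to keep track of the various powers of $K$, and of the $R^{\epsilon}$ loss that enters through Theorem~\ref{thm:bilinear0} and is absorbed into the constant $C_{A}$.
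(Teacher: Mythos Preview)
Your rescaling argument is clean and correct \emph{if} the hypothesis ``centers are $O(K^{-1})$-separated'' is read as an upper bound on the distance between the caps. Unfortunately that reading is not the one the paper actually needs, and the paper's own proof makes this clear: it explicitly treats the case $\mathrm{dist}(\tau_{1},\tau_{2})\gtrsim K^{-1/2}$, and in the application (Lemma~\ref{lem:bilinearLem} and the broad case in Section~3) the strongly separated caps are arbitrary $K^{-1}$-caps in $B^{d-1}(0,2)$, so their centers may sit at distance up to $\sim 1$. The phrase in the statement should be read as a \emph{lower} bound $\gtrsim K^{-1}$ on the separation (or simply dropped), not an upper bound.

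Under that reading your argument breaks down. If $|\xi^{*}-\eta^{*}|\sim 1$ while $|M(\xi^{*}-\eta^{*})\cdot(\xi^{*}-\eta^{*})|\sim K^{-1}$ (i.e.\ the difference is nearly $M$-null), then the isotropic dilation $\eta\mapsto K\eta$ sends $\tau_{2}$ to a cap at distance $\sim K$ from the origin, outside any fixed ball, so Theorem~\ref{thm:bilinear0} no longer applies. Rescaling instead by the reciprocal of the cap distance keeps the supports bounded but only yields the lower bound $\gtrsim AK^{-1}$ in \eqref{eq:bilinearCondition0}, which is not uniform. This is precisely why the paper does \emph{not} use a pure parabolic rescaling: after rotating in the positive and negative blocks separately, it builds a linear map $S$ (with $\|S\|\lesssim K^{1/2}$) that sends the center of $\tau_{1}$ to a standard basis vector $e_{1}$, and then checks directly that $|M(S\xi-S\eta)\cdot(S\bar\xi-S\bar\eta)|\gtrsim 1$. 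Your argument does recover the paper's ``easy'' sub-case (caps at distance $\lesssim K^{-1/2}$, handled there by rescaling by $K^{1/2}$), but it misses the geometrically interesting regime where the caps are far apart yet the bilinear form is small.
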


\noindent We say that two $K^{-1}$-caps $\tau_1, \tau_2$ are \textit{strongly separated} if \eqref{eq:bilinearCondition} holds. 

Since it is not immediately obvious from scaling that Theorem \ref{thm:bilinear0} implies Theorem \ref{thm:bilinear}, we will prove the implication below in Section 2.2.

\subsection{Some background} Bilinear restriction estimates in the full range given in Theorem \ref{thm:bilinear0} were first proved by Wolff in the case of the cone \cite{W}. Wolff's methods were later adapted by Tao in the case of the elliptic paraboloid \cite{T}, and then by Vargas and S. Lee independently in the case of hyperbolic paraboloids. In the case of the cone and the elliptic paraboloid the transversality condition \eqref{eq:bilinearCondition} is much simpler. 

There is an argument due to Tao, Vargas, and Vega (\cite{TVV}) that allows one to deduce linear restriction estimates from bilinear restriction estimates for elliptic surfaces, and indeed linear restriction estimates are obtained as corollaries of the main results in \cite{T} and \cite{W}. Let $E_{e}$ denote the extension operator associated to the elliptic paraboloid The main idea of the argument from \cite{TVV} is that any two points will belong to a unique pair of dyadic cubes that are separated by a distance proportional to their scale; one can then use this observation to efficiently decompose $|E_{e}f|^2$ as a sum of terms to which bilinear estimates apply (after a parabolic rescaling). For hyperbolic paraboloids this argument requires different ideas since the stronger transversality condition \eqref{eq:bilinearCondition} is more complicated. 

In the special case $d = 3, m =1 $ one can apply a simple change variables and instead consider the extension operator associated to the surface $$ \{\xi \in \R^3: \xi_{3} = \xi_1 \xi_2, \ \ |\xi| \leq 2 \}.$$ Then \eqref{eq:bilinearCondition} is equivalent to the following two-parameter separation condition: \begin{equation} \label{eq:threeDimTransv}|\xi_{1} - \eta_{1}| \gtrapprox 1 \ \ \ \text{ and } \ \ \ |\xi_{2} - \eta_{2}| \gtrapprox 1 \ \ \ \text{ for all  } \xi \in \tau_1, \eta \in \tau_{2}.\end{equation} Vargas and S. Lee were able to use this observation to almost recover the bilinear-to-linear reduction from \cite{TVV}, up to certain endpoint cases which were later proved by Stovall \cite{S}. All of these arguments rely on the fact that \eqref{eq:threeDimTransv} facilitates a two-parameter decomposition of frequency space analogous to the decomposition used in \cite{TVV}. When $d \geq 4$  this is no longer the case, and the condition \eqref{eq:bilinearCondition} is no longer well-adapted to Whitney-type decompositions. In particular note that if $d = 3$ then \eqref{eq:threeDimTransv} can only fail if all the caps are arranged in a neighborhood of an axis-parallel line (which becomes a diagonal or anti-diagonal line if we undo the change of variables and write the phase as $\xi_{1}^2 - \xi_{2}^2$). However, when $d \geq 4$ the estimate \eqref{eq:bilnearEst} can fail if the caps are contained near a subset of (a translation of) the hyperbolic cone $$\mathcal{C} = \{\xi \in \R^{d-1} : \xi_{1}^2 + ... + \xi_{d-m-1}^2 = \xi_{d-m}^2 + ... + \xi_{d-1}^2  \}.$$ 

After we deduce Theorem \ref{thm:bilinear} we will analyze what can happen in the exceptional case where \eqref{eq:bilinearCondition} fails for all pairs of caps in the support of $f$. We will see that failure of \eqref{eq:bilinearCondition} for every pair of caps forces $f$ to be supported near an affine space of dimension $m$. We will then be able to use decoupling and induction to prove Theorem \ref{thm:restrictionThm} in the `narrow' cases where we cannot use Theorem \ref{thm:bilinear}.

As mentioned in the introduction, our methods do not work when $d = 3, m = 1$. In this case Theorem \ref{thm:restrictionThm} is still true and follows from arguments by S. Lee, Stovall, or Vargas (\cite{L}, \cite{S}, \cite{V}). Of course when $d=3$ Theorem \ref{thm:restrictionThm} also follows from the stronger restriction estimate due to Cho and J. Lee \cite{CL}.

\subsection{Proof that Theorem \ref{thm:bilinear0} implies Theorem \ref{thm:bilinear}} Let $e_j$ denote the standard basis vectors in $\R^{d-1}$. Let $\tau_1$ and $\tau_2$ be two $K^{-1}$-caps for which \eqref{eq:bilinearCondition} holds. After translation we can assume that $\tau_2$ is centered at the origin. We may assume that dist$(\tau_1, \tau_2) \gtrsim K^{-\frac{1}{2}}$ since otherwise the desired result follows easily by rescaling frequency space by $K^{\frac{1}{2}}$. 

Since $\tau_2$ is centered at the origin the condition \eqref{eq:bilinearCondition} is invariant under linear transformations of the form $U = U' \oplus U'',$ where $U'$ is a rotation in $\xi_1,...,\xi_{d-m-1}$ that fixes $\xi_{d-m},...,\xi_{d-1},$ and $U''$ is a rotation in $\xi_{d-m},...,\xi_{d-1}$ that fixes $\xi_{1},...,\xi_{d-m-1}$. We can therefore assume that $\tau_1$ is centered at a point of the form $$\xi^{\ast} = (\xi_1,0,...,0,\xi_{d-m},\xi_{d-m+1},...,\xi_{d-1})$$ with $$|\xi_{1}^2 - \xi_{d-m}^{2} - ... - \xi_{d-1}^2 | \geq cK^{-1}.$$ Let us first assume that $$\xi_{1}^2 - \xi_{d-m}^{2} - ... - \xi_{d-1}^2 \geq cK^{-1}.$$ Since we are also assuming $|\xi^{\ast}|^2 \geq cK^{-1}$ it follows that \begin{equation} \label{eq:detEq} \xi_{1}^2 \geq cK^{-1}.\end{equation} Now let $S$ be the linear transformation such that $$S\xi^{\ast} = e_1$$ $$S e_{j} = e_{j} \ \ \ j= 2,..., d-1.$$ One checks using \eqref{eq:detEq} that $$\|S\| \sim \frac{1}{|\xi_1|} \lesssim K^{\frac{1}{2}}.$$ In particular the first column of $S$ is $$(1/\xi_1, 0,...,0, -\xi_{d-m}/\xi_1,...,-\xi_{d-1}/\xi_1)$$ while the other columns are $e_2,...,e_{d-1}$. Now suppose $\eta, \bar{\eta} \in \tau_2$. Since we are assuming that $\tau_2$ is centered at the origin we then have

\begin{align} \label{eq:detEq2} |M( S\xi^{\ast} - S\eta )  \cdot (S\xi^{\ast} - S\bar\eta )| &= |Me_1 \cdot e_1 + O(K^{-\frac{1}{2}})| \gtrsim 1.  \end{align} Since changing $\xi^{\ast}$ to any other $\xi \in \tau_1$ in \eqref{eq:detEq2} only introduces an error of $O(K^{-\frac{1}{2}})$ it follows that the caps $S\tau_1, S\tau_2$ satisfy the condition \eqref{eq:bilinearCondition0}, and so \eqref{eq:bilnearEst} follows from \eqref{eq:bilnearEst0} after rescaling $f_1, f_2$ (which is allowed since we can lose $K^{O(1)}$ in the bilinear estimate).

In the case where $$-\xi_{1}^2 + \xi_{d-m}^{2} + ... + \xi_{d-1}^2 \geq cK^{-1}$$ we apply another transformation $U = U' \oplus U''$ to map $\xi^{\ast}$ to $$U\xi^{\ast} = (\xi_1, 0, ..., 0, \tilde{\xi}_{d-m}, 0,...,0 ).$$ Then since $U$ does not change the norm of either $(\xi_{d-m}, \xi_{d-m+1},...,\xi_{d-1})$ or $(\xi_1, 0,...,0)$ it follows that $$\tilde{\xi}_{d-m} - \xi_1 \geq cK^{-1},$$ and so we repeat the previous argument with $\tilde{\xi}_{d-m}$ playing the role of $\xi_1$. 

\subsection{Failure of bilinear estimates.} We now prove that if the bilinear estimates in Theorem \ref{thm:bilinear} fail then the caps $\tau$ must be localized near an $m$-dimensional plane. We first prove some geometric lemmas that will lead us in this direction, with the main result of the section being Lemma \ref{lem:caseLemma} below. Given $\xi = (\xi_1, ..., \xi_{d-1}) \in \R^{d-1}$, we will write $\xi' = (\xi_1, \xi_2,...,\xi_{d-m-1})$ and also $\xi'' = (\xi_{d-m},...,\xi_{d-1}).$ The following lemma can be thought of as an approximate polarization identity.

\begin{lemma}\label{lem:coneLemma} Let $\mathcal{C}$ denote the surface $$\mathcal{C}= \{ \xi \in B^{d-1}(0,2) :  \xi_{1}^2 + ... + \xi_{d-m-1}^2 = \xi_{d-m}^2 + ... + \xi_{d-1}^2, \ \ |\xi| < 2 \}$$ and let $$\mathcal{C}_{r} = \{\xi\in B^{d-1}(0,2) : \ |\xi \cdot M \xi| \leq r  \}. $$ Suppose $\xi, \eta \in \mathcal{C}_{cK^{-1}}.$ Let $T_{\xi}$ denote the subspace $$ T_{\xi} = \{\omega \in \R^{d-1} : \omega \cdot M\xi =0 \}.$$ If $\xi - \eta \in \mathcal{C}_{CK^{-1}}$ then $\eta$ is in an $O(K^{-1})$ neighborhood of $T_{\xi}$. \end{lemma}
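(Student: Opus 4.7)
The plan is to reduce the geometric claim to a single polarization identity. Using the symmetry of the diagonal matrix $M$, I would expand
\[
(\xi - \eta) \cdot M(\xi - \eta) \;=\; \xi \cdot M\xi \;-\; 2\, M\xi \cdot \eta \;+\; \eta \cdot M\eta,
\]
and then solve for the mixed inner product $M\xi \cdot \eta$. Each of the three pure quadratic terms on the right is $O(K^{-1})$ by the hypotheses $\xi,\eta \in \mathcal{C}_{cK^{-1}}$ and $\xi-\eta \in \mathcal{C}_{CK^{-1}}$, so this immediately gives $|M\xi \cdot \eta| \lesssim K^{-1}$.

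The second step is to convert this bound on a linear functional into Euclidean distance. The distance from $\eta$ to the hyperplane $T_\xi = \{\omega : M\xi \cdot \omega = 0\}$ is exactly $|M\xi \cdot \eta|/|M\xi|$, and since $M$ is diagonal with entries $\pm 1$ it is an isometry, so $|M\xi| = |\xi|$. Assuming $|\xi|$ is bounded below by a constant one then reads off the $O(K^{-1})$ bound.

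There is no real obstacle in the computation itself; the only mild wrinkle is that $|\xi|$ must be bounded away from $0$ for the geometric conclusion to be meaningful (otherwise $T_\xi$ degenerates to all of $\R^{d-1}$, in which case the statement is vacuous). In the intended application $\xi$ is the center of a cap separated from the origin by a definite amount, so this is not a genuine obstacle, but it is the one point where the geometry of the configuration enters and it should be noted when the lemma is invoked. Apart from this, the entire content of the lemma is the polarization identity together with the identification of $M$ as an isometry.
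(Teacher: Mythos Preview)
Your proof is correct and is essentially the same as the paper's: the paper also expands $(\xi-\eta)\cdot M(\xi-\eta)$ and uses the three $O(K^{-1})$ hypotheses to extract $\eta\cdot M\xi = O(K^{-1})$, just written out in coordinates rather than via the polarization identity. Your additional remark about dividing by $|M\xi|=|\xi|$ to pass to Euclidean distance is a point the paper glosses over (it simply stops at $\eta\cdot M\xi = O(K^{-1})$, which is in fact all that is ever used downstream), and your caveat about $|\xi|$ being bounded below is a fair observation.
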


\begin{proof} Since $\xi, \eta \in \mathcal{C}_{cK^{-1}}$ and $\xi - \eta \in \mathcal{C}_{CK^{-1}}$ we have \begin{align*}\sum_{i=1}^{d-m-1} (\xi_i - \eta_i)^2 &= \sum_{i=d-m}^{d-1} (\xi_i - \eta_i)^2 + O(K^{-1}) \\ &= \sum_{i=1}^{d-m-1} (\xi_{i}^2 + \eta_{i}^2) - 2\sum_{i=d-m}^{d-1} \xi_i \eta_i + O(K^{-1}).\end{align*}
	
\noindent After expanding the square on the left side and rearranging, we obtain \begin{equation*} \xi' \cdot \eta' = \sum_{i=d-m}^{d-1} \xi_i \eta_i + O(K^{-1}) = \xi'' \cdot \eta'' + O(K^{-1}). \end{equation*} As a consequence \begin{equation*} \eta \cdot M \xi = O(K^{-1}),
\end{equation*} which proves the lemma.
		
\end{proof}

\begin{lemma}\label{lem:tangentPlane} Let $V$ be a subspace of $\R^{d-1}$ and suppose that $V \cap B^{d-1}(0,2) \subset \mathcal{C}_{cK^{-a}},$ where $a > 0$. Then if $K$ is sufficiently large we must have dim $V \leq m$.
\end{lemma}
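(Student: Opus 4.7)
The plan is to exploit the indefinite signature of the quadratic form $Q(\xi) := \xi \cdot M \xi = \sum_{i=1}^{d-1-m} \xi_i^2 - \sum_{i=d-m}^{d-1} \xi_i^2$, which has signature $(d-1-m, m)$. Recall that the convention $m \leq (d-1)/2$ gives $m \leq d-1-m$, so the negative part is the smaller one. The hypothesis $V \cap B^{d-1}(0,2) \subset \mathcal{C}_{cK^{-a}}$ translates to $|Q(\xi)| \leq cK^{-a}$ for every $\xi \in V$ with $|\xi| < 2$; by homogeneity (or just by restricting to unit vectors, which lie in $B^{d-1}(0,2)$) this gives $|Q(v)| \leq cK^{-a}$ for every unit vector $v \in V$.

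The main step is to produce an injection from $V$ into a distinguished $m$-dimensional coordinate subspace. Let $W_-$ denote the span of $e_{d-m}, \ldots, e_{d-1}$, which is precisely the $(-1)$-eigenspace of $M$, and let $\pi_- : \R^{d-1} \to W_-$ denote the orthogonal projection. I claim that for $K$ sufficiently large, $\pi_-|_V$ has trivial kernel. Indeed, suppose $v \in V$ satisfies $|v| = 1$ and $\pi_-(v) = 0$; then $v_{d-m} = \cdots = v_{d-1} = 0$, so $v$ lies entirely in the positive eigenspace of $M$ and $Q(v) = |v|^2 = 1$. This contradicts $|Q(v)| \leq cK^{-a}$ as soon as $cK^{-a} < 1$, i.e.\ as soon as $K$ is large (depending on $c$ and $a$).

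Once injectivity of $\pi_-|_V$ is established, the conclusion is immediate: $\dim V \leq \dim W_- = m$. I do not foresee a real obstacle here. The only thing one has to notice is that when an indefinite form is nearly zero on a subspace, that subspace must avoid the entire definite part of the form, and this geometric fact is captured cleanly by projecting onto the negative coordinate directions. The assumption $a > 0$ is used only to ensure $cK^{-a} \to 0$ as $K \to \infty$; any decay suffices.
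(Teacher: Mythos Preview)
Your argument is correct and is in fact cleaner than the paper's. The paper chooses an orthonormal basis $\{v^1,\dots,v^k\}$ of $V$, uses the polarization-type Lemma~\ref{lem:coneLemma} to deduce $v^i\cdot Mv^j = O(K^{-a})$ for all pairs, and then combines this with $v^i\cdot v^j = \delta_{ij}$ to show that the projections $Pv^i$ onto the last $m$ coordinates are nearly orthogonal with $|Pv^i|^2 \approx 1/2$; hence they are linearly independent in $\R^m$, forcing $k\leq m$. Your route bypasses Lemma~\ref{lem:coneLemma} entirely by observing that $\ker(\pi_-|_V)$ sits inside the positive eigenspace of $M$, where $Q$ is definite, and a single unit vector there already violates the hypothesis once $cK^{-a}<1$. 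This is a pure linear-algebra fact about totally isotropic (or nearly isotropic) subspaces of an indefinite form and needs no quantitative near-orthogonality. The paper's argument does yield the extra information that the $Pv^i$ are almost an orthogonal frame of length $1/\sqrt{2}$, but that is not used anywhere, so for the purposes of this lemma your proof is strictly more economical.
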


\begin{proof} Let $\{v^1, ..., v^k\}$ be an orthonormal basis for $V$. By hypothesis we know that $$v^{i}\cdot Mv^{i} = O(K^{-a})$$ for each $i$. Also note that $v^{i} - v^{j} \in V \cap B^{d-1}(0,2)$ and therefore $v^{i} - v^j \in \mathcal{C}_{cK^{-a}}.$ Then from Lemma \ref{lem:coneLemma} we conclude that \begin{equation}\label{eq:plane1} v^i \cdot Mv^j = O(K^{-a}) \end{equation} for each pair $i,j$. Of course \begin{equation}\label{eq:plane2}v^i \cdot v^j = 0, \ \ \ \ \ i \neq j\end{equation} by hypothesis. Now let $P: \R^{d-1} \rightarrow \R^{m}$ denote the orthogonal projection $$P\omega = (\omega_{d-m},...,\omega_{d-1}) \in \R^{m}.$$  

\noindent From \eqref{eq:plane1} and \eqref{eq:plane2} we conclude that \begin{equation} \label{eq:plane3} Pv^{i} \cdot Pv^{j} = O(K^{-a}) \ \text{ if} \ \ i \neq j, \ \ \ \ Pv^i \cdot Pv^i = \frac{1}{2} + O(K^{-a}). \end{equation} But if $K$ is large enough, depending only on $a$ and the implicit constants above, then \eqref{eq:plane3} implies that the set $\{Pv^1,..., Pv^{k} \}$ is linearly independent. One way to see this is to note that \eqref{eq:plane3} implies that the Gramian matrix $G$ with entries $G_{ij} = \langle Pv^i, Pv^{j} \rangle$ is a small perturbation of $\frac{1}{2}I$ when $K$ is large enough, with $I$ the $m\times m$ identity matrix. The claimed independence follows at once from the characterization of independence in terms of the Gramian matrix. Alternatively, from $\eqref{eq:plane3}$ it follows that there is some $\alpha > 0$ such that $$|\text{Angle}(Pv^i, Pv^j) - \pi/2| \leq \alpha K^{-a}, \ \ i\neq j $$ $$ ||Pv^i|^2 - 1/2| \leq \alpha K^{-a}$$ which implies the claimed independence if $K$ is large enough (depending on the value of $a$ and $\alpha$). 

Since the elements of $\{Pv^1, ..., Pv^{k} \}$ are all vectors in $\R^m$ we must have $k \leq m$ and so dim $V \leq m$.  
	 
\end{proof}

%
%

The following lemma is the main result of this section. 

\begin{lemma}\label{lem:caseLemma} Let $\{\tau\}$ be a collection of finitely-overlapping $K^{-1}$-caps in $B^{d-1}(0,2)$ with $Ef = \sum_{\tau} Ef_{\tau}.$ If $K$ is sufficiently large then one of the following must occur. \begin{enumerate}[label=(\roman*)]
		\item There exists a uniform $\alpha > 0$ and an $m$-dimensional affine space $V$ such that every $\tau$ is contained in an $O(K^{-\alpha })$ neighborhood of $V$.  
		\item There are two $K^{-1}$-caps $\tau, \tau'$ for which $$\inf_{ \substack{ \xi, \bar{\xi} \in \tau \\ \omega, \bar{\omega} \in \tau' } } | M(\xi - \omega) \cdot (\bar{\xi} - \bar{\omega})| \geq AK^{-1}.$$
	\end{enumerate}
\end{lemma}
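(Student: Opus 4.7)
The plan is to prove the contrapositive: if (ii) fails, then (i) holds. Assume that for every pair of $K^{-1}$-caps $\tau, \tau'$ one has $\inf_{\xi, \bar\xi \in \tau,\, \omega, \bar\omega \in \tau'} |M(\xi - \omega) \cdot (\bar\xi - \bar\omega)| < AK^{-1}$. If $\xi_\tau, \xi_{\tau'}$ denote the centers of $\tau, \tau'$, then writing $\xi = \xi_\tau + O(K^{-1})$, $\omega = \xi_{\tau'} + O(K^{-1})$, etc., and expanding the bilinear form shows that the cross terms are all $O(K^{-1})$. Hence $|M(\xi_\tau - \xi_{\tau'}) \cdot (\xi_\tau - \xi_{\tau'})| \leq (A + C)K^{-1}$ for a dimensional constant $C$; equivalently $\xi_\tau - \xi_{\tau'} \in \mathcal{C}_{(A+C)K^{-1}}$ for every pair of cap centers.

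Next I would fix a base cap center $\xi_0$ and set $\eta_j := \xi_j - \xi_0$ for every other center $\xi_j$. The previous paragraph gives $\eta_j \in \mathcal{C}_{(A+C)K^{-1}}$, and the differences $\eta_j - \eta_i = \xi_j - \xi_i$ likewise lie in $\mathcal{C}_{(A+C)K^{-1}}$. Lemma \ref{lem:coneLemma} then yields the near-orthogonality $\eta_i \cdot M \eta_j = O(K^{-1})$ uniformly in $i, j$.

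Fix a small parameter $\alpha = \alpha(d, m) > 0$ to be chosen. I would inductively build a maximal ``quantitatively independent'' subsystem $\eta_{i_1}, \ldots, \eta_{i_k}$ by selecting at each step $l$ (if possible) a vector $\eta_j$ whose distance to $W_{l-1} := \text{span}\{\eta_{i_1}, \ldots, \eta_{i_{l-1}}\}$ exceeds $K^{-\alpha}$. If the process terminates with $k \leq m$, then every $\eta_j$ lies within $K^{-\alpha}$ of $W_k$, so every cap (whose radius $K^{-1}$ is $\leq K^{-\alpha}$) is contained in the $O(K^{-\alpha})$-neighborhood of the affine space $\xi_0 + V$, where $V$ is any $m$-dimensional extension of $W_k$. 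This is exactly case (i).

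The main obstacle is to rule out the remaining case $k = m + 1$; here I would show $W_{m+1} \cap B^{d-1}(0, 2) \subset \mathcal{C}_{cK^{-a}}$ for some $a > 0$ and invoke Lemma \ref{lem:tangentPlane} to get $\dim W_{m+1} \leq m$, contradicting $\dim W_{m+1} = m+1$. To obtain the inclusion I would apply Gram-Schmidt to $\eta_{i_1}, \ldots, \eta_{i_{m+1}}$ and analyze the upper-triangular change-of-basis matrix $R$: its diagonal entries are the distances $\geq K^{-\alpha}$ guaranteed by the construction, while its off-diagonal entries are controlled by $|\eta_{i_l}| \leq 4$. This forces $\|R^{-1}\| \lesssim K^{O((m+1)\alpha)}$, so any $v = \sum c_l \eta_{i_l} \in W_{m+1}$ with $|v| \leq 2$ satisfies $\sum |c_l| \lesssim K^{O((m+1)\alpha)}$. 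Combined with the near-orthogonality $\eta_{i_l} \cdot M \eta_{i_{l'}} = O(K^{-1})$, the expansion
$$v \cdot Mv = \sum_{l, l'} c_l c_{l'} \, \eta_{i_l} \cdot M \eta_{i_{l'}}$$
gives $|v \cdot Mv| \lesssim K^{-1 + O((m+1)\alpha)}$, which is $\leq K^{-a}$ for some $a > 0$ provided $\alpha$ is taken small enough (say $\alpha = 1/(100m)$). The delicate point is balancing the coefficient blow-up from near-degeneracy at each inductive step against the $K^{-1}$ gain from near-orthogonality; this is what pins down the permissible range of $\alpha$.
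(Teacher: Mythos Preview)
Your proposal is correct and follows essentially the same approach as the paper: assume (ii) fails, translate one cap to the origin, use Lemma~\ref{lem:coneLemma} to obtain the near-orthogonality $\eta_i\cdot M\eta_j=O(K^{-1})$, extract a maximal quantitatively independent subfamily, bound the coefficients of any unit vector in its span, and conclude via Lemma~\ref{lem:tangentPlane} that the span has dimension at most $m$. The only cosmetic difference is that the paper encodes quantitative independence through a wedge-product lower bound $|\eta^1\wedge\cdots\wedge\eta^{k'}|\gtrsim K^{-1/4+\sigma/2}$, whereas you build the subfamily greedily via a distance threshold and control the change-of-basis matrix by Gram--Schmidt; both devices serve the identical purpose of bounding the expansion coefficients by a power of $K$ small enough to beat the $O(K^{-1})$ gain from near-orthogonality.
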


\begin{proof} Suppose that (ii) fails and let $\tau_0, \tau_1, ..., \tau_{k}$ be distinct caps in $B^{d-1}(0,2)$ intersecting the support of $f$. We can assume we can find such caps with $k \geq 2$ or else (i) is trivially true. After modulating $Ef$ we can also assume that $\tau_0$ is centered at the origin.
	
Pick $\eta^{i} \in \tau_{i}$ for $i = 1,...,k$. Since (ii) fails for each pair of caps $(\tau_0, \tau_i)$ we see that $\eta^i \in \mathcal{C}_{cK^{-1}}$ for each $i$, with the constant $c$ depending only on $d,A$. Since (ii) also fails for each pair $(\tau_i, \tau_j)$ when $i \neq j$ we see that $\eta^i - \eta^j \in \mathcal{C}_{cK^{-1}}$ as well. Then by Lemma \ref{lem:coneLemma} we conclude that \begin{equation}\label{eq:spanId} \eta^i \cdot M\eta^j = O(K^{-1}) \end{equation} for each $i,j$ (including $i = j$).

 Let $\mathcal{S} = \{\eta^1, ..., \eta^{k} \}$. We now construct the space $V$ via a sequence of spaces $$V_1 \subset V_2 \subset ... \subset  V_{k'} = V,$$ with $$V_{j} = \textrm{span} \{\eta^{i_1}, ... ,\eta^{i_j} \}$$ for some subset of $\mathcal{S}$. We note that $V$ will be a vector space since we have shifted $\tau_0$ to the origin. Fix a small parameter $a > 0$, to be determined below. We pick any $\eta^{i_1} \in \mathcal{S}$ with $|\eta^{i_1}| \geq K^{-a}$ and set $V_1 = \textrm{span}\{\eta^{i_1}\}$. If no such $\eta^{i_1}$ exists then all of the caps are contained in an $O(K^{-a})$ neighborhood of the origin and (i) follows with $V = \{0\}$. Now assume, by induction, that we have constructed $$V_{j-1} = \textrm{span}\{\eta^{i_1}, ... , \eta^{i_{j-1}} \}.$$ If there are any $\eta^{i_{j}} \in \mathcal{S} \backslash \{\eta^{i_1}, ..., \eta^{i_{j-1}} \}$ such that \begin{equation}\label{eq:lemmaCond} |\eta^{i_j}| \geq K^{-a} \ \ \text{ and } \ \ \text{Angle}(\eta^{i_j}, V_{j-1}) \geq K^{-a}\end{equation} we pick one and let $$V_{j} = \text{span}\{\eta^{i_1},..., \eta^{i_j} \}.$$ If no such $\eta^{i_j}$ exists we stop the procedure and let $V = V_{j-1}$. This process continues until there are no more $\eta \in \mathcal{S}$ satisfying \eqref{eq:lemmaCond}. Say this happens at step $k' + 1$, in which case $V = V_{k'}$ (note that there are at most $d-1$ steps).
 
 After possibly relabeling we assume that $V = \text{span}\{\eta^1, \eta^2, ..., \eta^{k'} \}.$ We now claim that $k' \leq m$. This implies (i) since (by construction) the centers of the remaining caps in $B^{d-1}(0,2)$ make an angle $O(K^{-a})$ with $V$ or are contained in a ball of radius $O(K^{-a})$ centered at the origin. To prove the claim, first note that if $\omega \in V \cap B^{d-1}(0,2)$ with $\omega = \sum_{i=1}^{k'}a_i \eta^{i}$ then we have  \begin{equation}\label{eq:spanId2} \omega \cdot M\omega = \sum_{i,j} a_{i}a_j (\eta^{i}\cdot M\eta^{j}).\end{equation} We claim that it suffices to show that \begin{equation}\label{eq:aBound}|a_i| \lesssim K^{1/2 - \sigma} \end{equation} for some small $\sigma > 0$ depending only on $d$. Indeed, if \eqref{eq:aBound} holds then by \eqref{eq:spanId2} and \eqref{eq:spanId} we have $\omega \cdot M\omega = O(K^{-2\sigma})$, and therefore  $$V \cap B^{d-1}(0,2) \subset \mathcal{C}_{cK^{-2\sigma}}$$ since $\omega$ was arbitrary. But then Lemma \ref{lem:tangentPlane} implies that $k' = \text{dim } V \leq m$ (provided $K$ is sufficiently large), as desired. 
 
We now prove \eqref{eq:aBound}. After applying an orthogonal transformation we may assume that $V$ is spanned by the standard basis vectors $\{e_1, ..., e_{k'}\}$ and that the $j$-th component of $\eta^i$ is 0 for $j > k'$. Without loss of generality we may view $\omega$ and the $\eta^i$ as elements of $\R^{k'}$, since only their first $k'$ components are nonzero. We let $B$ denote the matrix with columns given by the $\eta^i$. We also let $B_i (x)$ denote the matrix obtained by replacing the $i$-th column of $B$ by $x$ (that is, replacing $\eta^i$ by $x$). Then we have the following identity, which is essentially Cramer's rule: \begin{equation}\label{eq:cramer} \omega = \frac{1}{\det(B)}\sum_{i=1}^{k'} \det(B_i (\omega)) \eta^i. \end{equation} Indeed, $\det(B_i(\eta^j)) = 0$ if $i \neq j$, so this identity follows by expanding $\det(B_{i}(\omega))$ using $\omega = \sum_{j} a_j \eta^j$ and linearity. From \eqref{eq:cramer} it follows that $a_i = \frac{\det(B_i (\omega))}{\det(B)},$ and hence by Hadamard's inequality $$|a_i| \leq \frac{1}{|\det(B)|} |\omega| \prod_{j \neq i}|\eta^j| \lesssim \frac{1}{|\det(B)|}. $$ Finally, recall that by construction \begin{equation} \label{eq:angle}
\text{Angle}(\eta^i, V_{i-1}) \geq K^{-a} \ \text{ with } \ V_{i-1} = \text{span}\{\eta^1, ..., \eta^{i-1}\}, \end{equation} and $|\eta^i| \geq K^{-a}$. Therefore  $$|\det(B)| = |\eta^1 \wedge ... \wedge \eta^{k'}| \gtrsim K^{-a(d-1)} K^{-a(d-2)} $$ (using $k' \leq d-1$). Indeed, the first term comes from rescaling the $\eta^i$ to have length one, and the second term is a lower bound for the volume of any parallelepiped spanned by unit vectors satisfying \eqref{eq:angle}. It follows that $|a_i| \lesssim K^{2a(d-1)}$, and so \eqref{eq:aBound} follows if we choose $a < \frac{1}{4(d-1)}$. This completes the proof.

(We remark that the choice of $a$ is far from optimal. For example, taking into account the lengths of the $\eta^i$ when applying Hadamard's inequality shows that we can take $a$ to be larger than $\frac{1}{4(d-1)}$. However, the precise value of $a$ is not relevant for the proof below so we have chosen to not track it too carefully). 
 
\end{proof}

In the next section we will take $K = R^{\delta}$ for some $\delta = \delta(\epsilon)$. We are allowed to assume that $K \geq C_{\epsilon}$ by induction, and therefore we will always be able to assume $K$ is large enough that Lemma \ref{lem:caseLemma} applies.

\section{The broad-narrow argument} We now prove Theorem \ref{thm:restrictionThm} using a broad-narrow argument adapted from \cite{BG}, \cite{G}, \cite{DZ}. Fix $\epsilon > 0$ for the rest of the argument. Let $\delta > 0$ be another parameter with $\delta < \epsilon^2$ and set $$K = R^{\delta} \text{  and  } K_{1} = K^{\alpha},$$ where $\alpha$ is as in part (i) of Lemma \ref{lem:caseLemma}. We assume that $\delta$ is small enough such that $$ K \ll K_1.$$ Let $\mathcal{T}$ be a collection of finitely-overlapping $K^{-1}$-caps $\tau$ covering the support of $f$ and use a partition of unity to decompose $f = \sum_{\tau}f_{\tau}$ with $f_{\tau}$ supported in (a small dilate of) $\tau$. We also let $\{\theta \}$ be a collection of finitely-overlapping $K_{1}^{-1}$-caps covering the support of $f$. Then $f = \sum_{\theta}f_{\theta}$ as well.

On the spatial side we fix a collection $\mathcal{Q}$ of finitely-overlapping $K^2$-cubes that cover $B^{d}(0,R)$. Given $Q\in \mathcal{Q}$ we define its significant set $$\mathcal{S}_{p}(Q) = \{ \tau \in \mathcal{T} : \|Ef_{\tau} \|_{L^p (Q)} \geq \frac{1}{100 (\# \mathcal{T}) }  \|Ef\|_{L^p (Q)} \}.$$ Note that we have $$ \| \sum_{\tau \notin \mathcal{S}_{p}(Q) } Ef_{\tau}\|_{L^{p}(Q)} \leq \frac{1}{100}\|Ef\|_{L^{p}(Q)}, $$ and so we will always be able to absorb these error terms into the left-hand side of our estimates for $\|Ef\|_{L^p (Q)}$ below.   

Now fix a uniform constant $A > 1$ to be determined below.  We say that a $K^{2}$-cube $Q$ is \textit{narrow }and write $Q \in \mathcal{N}$ if there is an $(m+1)$-dimensional subspace $W$ such that $$\text{Angle}(G(\tau), W) \leq AK_{1}^{-1}$$ for all $\tau \in \mathcal{S}_{p}(Q)$, where $G(\tau)$ is the unit normal to the surface $\mathbb{H}$ above the center of $\tau$. If a cube $Q$ is not narrow then we say it is \textit{broad} and write $Q \in \mathcal{B}$. We of course have $$\|Ef\|^{p}_{L^p (B_R)} \leq \sum_{Q \in \mathcal{N}} \|Ef\|_{L^{p}(Q)}^{p} + \sum_{Q \in \mathcal{B}} \|Ef\|_{L^{p}(Q)}^{p},$$ and so it suffices to consider separately the cases when the broad and narrow terms dominate. 

\subsection{The broad case} We first consider the broad case. We will need to use the following lemma which is a consequence of Theorem \ref{thm:bilinear} and the fact that $Ef$ is essentially constant at scale one. We recall that two caps $\tau_1$ and $\tau_2$ are said to be \textit{strongly separated} if \eqref{eq:bilinearCondition} holds.  

\begin{lemma}\label{lem:bilinearLem} Suppose $f$ is supported in $B^{d-1}(0,2)$. Let $\tau_1$ and $\tau_2$ be two strongly separated $K^{-1}$-caps. Then $$ \sum_{Q \in \mathcal{B} } \|Ef_{\tau_1}\|^{\frac{p}{2}}_{L^p(Q)}\|Ef_{\tau_2}\|^{\frac{p}{2}}_{L^p(Q)} \leq  K^{O(1)}\|f\|^{p}_{L^{2}}$$ whenever $p \geq \frac{2(d+2)}{d}$. 
\end{lemma}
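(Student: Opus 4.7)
The plan is to combine Theorem~\ref{thm:bilinear} with the local constancy of $Ef_{\tau_i}$ at unit scale. The latter holds because $f_{\tau_i}$ is supported in $B^{d-1}(0,2)$, so $\widehat{Ef_{\tau_i}}$ lives in a bounded subset of $\mathbb{H}$ of diameter $O(1)$. In particular, on any unit ball $B$ the function $|Ef_{\tau_i}|$ is essentially constant, so for any $x_B \in B$ one has $\|Ef_{\tau_i}\|_{L^p(B)}^{p/2} \sim |Ef_{\tau_i}(x_B)|^{p/2}$ and $\int_B |Ef_{\tau_1}|^{p/2}|Ef_{\tau_2}|^{p/2} \sim |Ef_{\tau_1}(x_B)Ef_{\tau_2}(x_B)|^{p/2}$, giving the clean unit-ball identity
\[
\|Ef_{\tau_1}\|_{L^p(B)}^{p/2} \|Ef_{\tau_2}\|_{L^p(B)}^{p/2} \sim \int_B |Ef_{\tau_1}|^{p/2}|Ef_{\tau_2}|^{p/2}.
\]

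The main work is to upgrade this to the $K^2$-cube scale, establishing
\[
\|Ef_{\tau_1}\|_{L^p(Q)}^{p/2} \|Ef_{\tau_2}\|_{L^p(Q)}^{p/2} \lesssim K^{O(1)} \int_Q |Ef_{\tau_1}|^{p/2}|Ef_{\tau_2}|^{p/2}
\]
for each $K^2$-cube $Q$. Once this is in hand, summing over $Q\in\mathcal{B}$ together with the finite-overlap property yields
\[
\sum_{Q\in\mathcal{B}} \|Ef_{\tau_1}\|_{L^p(Q)}^{p/2}\|Ef_{\tau_2}\|_{L^p(Q)}^{p/2} \lesssim K^{O(1)} \int_{B_R} |Ef_{\tau_1}|^{p/2}|Ef_{\tau_2}|^{p/2} = K^{O(1)} \||Ef_{\tau_1}Ef_{\tau_2}|^{1/2}\|_{L^p(B_R)}^p,
\]
and Theorem~\ref{thm:bilinear}, applied to the strongly separated pair $\tau_1,\tau_2$, bounds the right-hand side by $K^{O(1)}\|f_{\tau_1}\|_{L^2}^{p/2}\|f_{\tau_2}\|_{L^2}^{p/2} \leq K^{O(1)}\|f\|_{L^2}^p$, as required.

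The hardest step will be the cube-level comparison: a naive Cauchy--Schwarz on the decomposition of $Q$ into unit balls yields only the reverse inequality. To fix the direction at the cost of a $K^{O(1)}$ factor, I would exploit the wave-packet description of $Ef_{\tau_i}$ at scale $K^{-1}$: it is essentially constant along tubes of dimensions $K^{d-1}\times K^2$ with long direction $G(\tau_i)$, so $|Ef_{\tau_i}|$ cannot concentrate too sharply within a $K^2$-cube. A pigeonhole on the wave-packet amplitudes, or a passage through dyadic level sets of $|Ef_{\tau_i}|$, combined with the unit-ball identity above, should close the gap with an acceptable polynomial-in-$K$ loss.
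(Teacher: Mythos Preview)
Your cube-level inequality
\[
\|Ef_{\tau_1}\|_{L^p(Q)}^{p/2}\|Ef_{\tau_2}\|_{L^p(Q)}^{p/2}\lesssim K^{O(1)}\int_Q|Ef_{\tau_1}|^{p/2}|Ef_{\tau_2}|^{p/2}
\]
is false in general, and the wave-packet structure you invoke is precisely what makes it fail. Take each $Ef_{\tau_i}$ to consist, on $Q$, of a single wave packet essentially supported on a tube $T_i$ of dimensions $K\times\cdots\times K\times K^2$ in direction $G(\tau_i)$. In a $K^2$-cube in $\R^d$ with $d\ge 3$, two such tubes with distinct long directions can be disjoint: their core lines can be skew with separation of order $K^2\gg K$. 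Then the right-hand side is of order $K^{-N}$ for every $N$ (only Schwartz tails contribute), while the left-hand side is comparable to $|T_1|^{1/2}|T_2|^{1/2}\sim K^{d+1}$. No polynomial-in-$K$ factor can close this, and pigeonholing on amplitudes or level sets does nothing about disjointness of supports.

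The paper avoids this obstruction by a modulation trick rather than by comparing to $\int_Q|Ef_{\tau_1}Ef_{\tau_2}|^{p/2}$ for the original pair. Modulating $f_{\tau_i}$ by $e^{i(x_i\cdot\xi+t_i\, M\xi\cdot\xi)}$ translates $Ef_{\tau_i}$ by $(x_i,t_i)$, so one can choose modulations $f_1,f_2$ for which the $L^\infty(Q)$-maxima of $|Ef_1\ast\phi|$ and $|Ef_2\ast\phi|$ land in a common unit lattice cube $C_Q\subset Q$. After the trivial bound $\|Ef_{\tau_i}\|_{L^p(Q)}\le K^{O(1)}\|Ef_i\ast\phi\|_{L^\infty(C_Q)}$, one writes out the convolutions, applies Minkowski and H\"older across the sum over $Q$, and arrives at $\sup_{z_1,z_2}\int_{B_R}|E\widetilde f_1|^{p/2}|E\widetilde f_2|^{p/2}$ for further modulations $\widetilde f_i$. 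Theorem~\ref{thm:bilinear} then applies because each $\widetilde f_i$ is still supported in $\tau_i$ with $\|\widetilde f_i\|_{L^2}=\|f_{\tau_i}\|_{L^2}$. The essential idea you are missing is that one must first \emph{align} the peaks of the two factors (at the cost of replacing $f_{\tau_i}$ by a modulation) before passing to a bilinear integral; without that alignment the pointwise product can vanish on $Q$ even when both factors are individually large there.
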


\noindent The proof of this lemma is contained in the proof of Proposition 3.1 in \cite{DZ}, though for completeness we include most of the argument.
 
\begin{proof}  We define $f_i (\xi) = e^{i x_i \cdot \xi + t_i \cdot(M\xi \cdot \xi) }f_{\tau_i} (\xi)$ for some choice of $(x_i,t_i) \in \R^d$. Let $\phi$ be a bump function on $\R^d$ with $\widehat{\phi} = 1$ in $B^{d}(0,2)$ and $\widehat{\phi}$ supported in $B^{d}(0,3).$ Note that $Ef_{i} = Ef_{i} \ast \phi$ for any choice of $(x_i, t_i)$ in the definition of $f_i$. 
	
We first fix a single $Q$. Decompose $Q$ as a union of lattice cubes $L_Q$ of side-length $\frac{1}{10}$. Then we may find $(x_i, t_i)$ as above and $L_{Q}^{\ast} \subset Q$ such that $$\|Ef_i \ast \phi\|_{L^{\infty}(Q)} \leq \|Ef_i \ast \phi\|_{L^{\infty} (L_Q^{\ast})}$$ for both $i = 1,2$. Then

$$\|Ef_{\tau_1}\|^{\frac{1}{2}}_{L^p(Q)}\|Ef_{\tau_2}\|^{\frac{1}{2}}_{L^p(Q)}  \leq K^{O(1)} \|Ef_{1}\ast \phi \|_{L^{\infty}(L_Q^{\ast})}^{\frac{1}{2}}\|Ef_{2}\ast\phi\|_{L^{\infty}(L_Q^{\ast})}^{\frac{1}{2}}.$$ 

\noindent We may pick our bump function $\phi$ so that $\phi$ decays rapidly outside $B^{d}(0,1)$ with $$\sup_{w \in B^d (z, 1) } \phi(w) \lesssim \phi(z) \ \ \ \ \text{ for any } z \in \R^d.$$ Therefore \begin{align*}\|Ef_{1}&\ast \phi\|_{L^{\infty}(L_Q^{\ast})}^{\frac{p}{2}}\|Ef_{2}\ast \phi\|_{L^{\infty}(L_Q^{\ast})}^{\frac{p}{2}} \\ &\lesssim \big( \int_{L_Q^{\ast}} \int_{\R^d}\int_{\R^d} |Ef_1 (z_1)| |Ef_2(z_2)| \phi(z_1 - z) \phi(z_2 - z)  dz_1 dz_2 dz\big)^{\frac{p}{2}} \\ & = C\big(  \int_{\R^d}\int_{\R^d} \int_{L_Q^{\ast}}|Ef_1 (z_1 -z )| |Ef_2(z_2 - z)| \phi(z_1) \phi(z_2) dz  dz_1 dz_2\big)^{\frac{p}{2}}. \end{align*} We now sum over $Q$. By Minkowsi's and H\"{o}lder's inequalities we have

\begin{align*}&\sum_{Q \in \mathcal{B}} \|Ef_{1}\ast \phi\|_{L^{\infty}(L_Q^{\ast})}^{\frac{p}{2}}\|Ef_{2}\ast \phi\|_{L^{\infty}(L_Q^{\ast})}^{\frac{p}{2}} \\ &\lesssim \bigg[ \int_{\R^d}\int_{\R^d} \big( \int_{B_R} |Ef_1 (z_1 -z )|^{\frac{p}{2}} |Ef_2(z_2 - z)|^{\frac{p}{2}} \phi(z_1)^{\frac{p}{2}} \phi(z_2)^{ \frac{p}{2} } dz  \big)^{\frac{2}{p}} dz_1 dz_2 \bigg]^{\frac{p}{2}}  \\ &\lesssim \sup_{z_1, z_2}\int_{B_R} |Ef_1 (z_1 -z )|^{\frac{p}{2}} |Ef_2(z_2 - z)|^{\frac{p}{2}} dz  \\&\lesssim  \sup_{z_1, z_2}\int_{B_R} |E\widetilde{f_1} (z )|^{\frac{p}{2}} |E\widetilde{f_2}(z)|^{\frac{p}{2}}  dz\end{align*} where $\widetilde{f_{i}}$ is a modulation of $f_i$ that depends on $z_i$. Note that $$\|\widetilde{f_{i}}\|_{L^2} = \|f_{\tau_i}\|_{L^2}.$$ Since $\widetilde{f_{i}}$ is still supported in $\tau_i$ and the pair $(\tau_1, \tau_2)$ is strongly separated, we may apply Theorem \ref{thm:bilinear} to conclude that $$\sum_{Q \in \mathcal{B}}\|Ef_{1}\ast \phi\|_{L^{\infty}(L_Q^{\ast})}^{\frac{p}{2}}\|Ef_{2}\ast \phi\|_{L^{\infty}(L_Q^{\ast})}^{\frac{p}{2}} \leq K^{O(1)}\|f\|_{L^{2}}^{p},$$ which completes the proof.   
	
\end{proof}

Let $Q$ be a broad cube and first suppose that there is no strongly separated pair of caps in $\mathcal{S}_{p}(Q)$. Then by Lemma \ref{lem:caseLemma} there exists an $m$-dimensional affine space $V$ such that $\tau \subset N_{cK_{1}^{-1}}(V)$ for all $\tau \in \mathcal{S}_{p}(Q).$ But this forces the directions $G(\tau)$ to be in an $O(K_{1}^{-1})$ neighborhood of the $(m+1)$-plane $W$ in $\R^d$ given by scalar multiples of vectors in $G_{0}(V)$, where $$G_{0}(\omega) = |(\omega, - 1)|G(\omega)$$ (note that the angle between $G(\omega_1)$ and $G(\omega_2)$ is proportional to the distance $|\omega_1 - \omega_2|$ if the centers of the caps are $O(K^{-1})$-separated). Therefore $Q \in \mathcal{N}$, assuming we have chosen $A$ appropriately depending only on the constant from Lemma \ref{lem:caseLemma}. Since we are assuming $Q \in \mathcal{B}$ this cannot happen and so there must be two strongly separated caps $\tau_1, \tau_2 \in \mathcal{S}_{p}(Q)$. By the definition of $\mathcal{S}_p(Q)$ we then have $$\|Ef\|_{L^{p} (Q)} \leq K^{O(1)}\|Ef_{\tau_1}\|^{\frac{1}{2}}_{L^p(Q)}\|Ef_{\tau_2}\|^{\frac{1}{2}}_{L^p(Q)}.$$ The pair $(\tau_1, \tau_2)$ depends on $Q$, but we may make this estimate uniform by summing in $\ell^p$ over all possible strongly separated pairs (note the number of such pairs is $O(K^{2(d-1)})$). We then apply Lemma \ref{lem:bilinearLem} to conclude that \begin{align*}\sum_{Q \in \mathcal{B}}\|Ef\|^{p}_{L^{p}(Q)} &\leq K^{O(1)} \sum_{\substack{ (\tau_1, \tau_2) \\ \text{strongly sep.} }} \sum_{Q\in\mathcal{B} } \|Ef_{\tau_1}\|^{\frac{p}{2}}_{L^p(Q)}\|Ef_{\tau_2}\|^{\frac{p}{2}}_{L^p(Q)}\\ &\leq CR^{\epsilon p}\|f\|^{p}_{L^{p}}  \end{align*}(provided $\delta = \delta(\epsilon)$ is chosen small enough, e.g. $\delta = \epsilon^{4}$). 

\subsection{The narrow case} We now estimate the contribution of the narrow cubes. Suppose $Q \in \mathcal{N}$ and let $W$ be an $(m+1)$-plane in $\R^d$ such that $$\text{Angle}(G(\tau),W) \leq AK_{1}^{-1}$$ for each $\tau \in \mathcal{S}_{p}(Q)$. Then there is an $m$-dimensional affine space $V$ in $\R^{d-1}$ such that $\tau \subset N_{cK_{1}^{-1}}(V)$ for each $\tau \in \mathcal{S}_{p}(Q)$. In particular we can take $$V = \{\omega \in \R^{d-1} : G_0(\omega) \in W \}.$$ We choose a minimal collection $\Theta_{V}$ of $\theta$ covering $N_{cK_{1}^{-1}}(V)$. Note that $\Theta_{V}$ contains $cK_{1}^{m}$ caps $\theta$. Applying flat decoupling and then H\"{o}lder's inequality we obtain \begin{align*}\|Ef\|_{L^{p}(Q)} &\leq CK_{1}^{m(\frac{1}{2} - \frac{1}{p}) } \big( \sum_{\theta\in \Theta_{V}} \|Ef_{\theta}\|_{L^{p}(w_{Q})}^{2} \big)^{\frac{1}{2}}  \\ &\leq CK_{1}^{m(1 - \frac{2}{p} )} \big( \sum_{\theta\in \Theta_{V}} \|Ef_{\theta}\|_{L^{p}(w_{Q})}^{p} \big)^{\frac{1}{p}} \\ &\leq CK_{1}^{m (1 - \frac{2}{p}) } \big( \sum_{\theta} \|Ef_{\theta}\|_{L^{p}(w_{Q})}^{p} \big)^{\frac{1}{p}}. \end{align*} Since $$\sum_{Q}w_{Q} \lesssim w_{B_R}$$ we can sum over $Q$ to conclude that \begin{equation}\label{eq:narrowEst}\big(\sum_{Q \in \mathcal{N}} \|Ef\|_{L^{p}(Q)}^{p}\big)^{\frac{1}{p}} \leq  CK_{1}^{m(1 - \frac{2}{p}) } \big( \sum_{\theta} \|Ef_{\theta}\|_{L^{p}(w_{B_R})}^{p} \big)^{\frac{1}{p}}. \end{equation}

\noindent We will now use induction on scales. By Proposition \ref{prop:parabolicRescaling}, for each $\theta$ we can find a function $g_{\theta}$ supported in $B^{d-1}(0,2)$ such that $\|f_{\theta}\|_{L^p} = K_{1}^{-\frac{(d-1)}{p}}\|g_{\theta}\|_{L^{p}}$ and such that $$\|Ef_{\theta}\|_{L^{p}(w_{B_R})} \leq K_{1}^{-(d-1) + \frac{d+1}{p} }\|Eg_{\theta}\|_{L^{p}(w_{B_{R/K_{1} }})}.$$ By induction on scales we then obtain $$\|Ef_{\theta}\|_{L^{p}(w_{B_R})} \leq C_{\epsilon} R^{\epsilon} K_{1}^{-\epsilon }K_{1}^{-(d-1)+ \frac{d+1}{p}  }K_{1}^{\frac{d-1}{p} }\|f_{\theta}\|_{L^{p}}.$$ After applying this argument for each $\theta$ we see from \eqref{eq:narrowEst} that $$\big(\sum_{Q \in \mathcal{N}} \|Ef\|_{L^{p}(Q)}^{p}\big)^{\frac{1}{p}} \leq C_{\epsilon} R^{\epsilon}K_{1}^{-\epsilon }K_{1}^{m(1 - \frac{2}{p}) } K_{1}^{-(d-1)+ \frac{d+1}{p}  }K_{1}^{\frac{d-1}{p} }\|f\|_{L^{p}}.$$ The induction closes provided \begin{equation}\label{eq:induction} m(1 - \frac{2}{p}) - (d-1)+ \frac{2d}{p} \leq 0, \end{equation} since we may assume $K$ is large enough that $C_{\epsilon} K_{1}^{-\epsilon} \leq 1.$ Note that \eqref{eq:induction} is equivalent to $$p \geq \frac{2(d-m)}{d-m-1}.$$ Some algebra shows that $$ \frac{2(d-m)}{d-m-1} \leq \frac{2(d+2)}{d} $$ if and only if $$m \leq \frac{d}{2} -1.$$ We have assumed this is true for $m$, and so the narrow case of Theorem \ref{thm:restrictionThm} follows.

\vspace{3mm}

\begin{remark}\label{rmk:failure} In the narrow case above we have used flat decoupling in dimension $m$. This has nothing to do with the curvature of $\mathbb{H}$ and is true for any extension operator $E'f$ when $f$ is supported in a thin neighborhood of an $m$-plane. If one instead uses the stronger $\ell^2$ decoupling result proven by Bourgain and Demeter in \cite{BD2} there is no gain in our argument, since this still leads to a loss of $K_{1}^{m(\frac{1}{2} - \frac{1}{p})}$ in the first step. This is related to the fact that the surface $\mathbb{H}$ contains subsets which are affine spaces of dimension $m$, even though the curvature of $\mathbb{H}$ is nonzero. The $\ell^2$ decoupling does not distinguish the difference, since we can imagine that $Ef$ is supported in a small neighborhood of one of these affine spaces; in this case the $K_{1}^{m(\frac{1}{2} - \frac{1}{p} )}$ loss is sharp. 
	
We further elaborate on the last claim by considering the special case $d = 5, m = 2$. Note in this case $m = \frac{d-1}{2}$ and so our argument in the narrow case does not apply. Fix a $K^{2}$-cube $Q$ and suppose there is no pair of caps $(\tau_1, \tau_2)$ which are strongly separated and in $\mathcal{S}_{p}(Q)$. Then by Lemma \ref{lem:caseLemma} the support of $f$ must be contained in an $O(K_{1}^{-1})$-neighborhood of an $m$-plane $V$. If we assume there is at least one significant $\tau \in \mathcal{S}_{p}(Q)$ that contains the origin then from the proof of Lemma \ref{lem:caseLemma} we see that $V\cap B^{4}(0,2)$ can be taken to be a subset of the surface $\mathcal{C}$ defined in Section 2. Moreover $V$ can be assumed to be a vector space.   

Let $\{v, u\}$ be an orthonormal basis for $V$. Since $$v - u \in V\cap B^{4}(0, 2) \subset \mathcal{C}$$ the argument in Lemma \ref{lem:coneLemma} implies that $Mv \cdot u = 0$ and hence $Mu \cdot v =0$. We also know by hypothesis that $Mv \cdot v = 0$ and $Mu \cdot u = 0$. Therefore $\{v, u, Mv, Mu\}$ is an orthonormal basis for $\R^{4}$ with $V^{\perp} = \text{span}\{Mv, Mu \}.$ Now let $A$ be the orthonormal matrix with inverse $$A^{-1} = \begin{bmatrix} v & u & Mv & Mu \end{bmatrix}, $$ so that $A$ maps $V$ to the 2-plane determined by $\eta_{3} = 0$ and $\eta_{4} = 0$. Applying the change of coordinates determined by $A$ shows that $$\|Ef\|_{L^{p}(Q)} = \|\widetilde{E}f_{A}\|_{L^{p}(Q_{A})}  $$ where $f_A$ is the natural transform of $f$ and $\widetilde{E}$ is the extension operator with phase $$x\cdot \eta + t(M_{A} \eta \cdot \eta ),  $$ where $$ M_{A} = (A^{-1})^{T} M A^{-1} = \begin{bmatrix} 0 & 0 & 1 & 0 \\ 0 & 0 & 0 & 1 \\ 1 & 0 & 0 & 0 \\ 0 & 1 & 0 & 0
\end{bmatrix}. $$ In particular $\widetilde{E}$ is the extension operator associated to the hyperbolic surface $$\mathcal{H} = \{\eta \in \R^5 : \eta_5 = \eta_1 \eta_3 + \eta_2 \eta_4 \}. $$ Since $f$ is supported in a $K_{1}^{-1}$-neighborhood of $V$ it follows that $f_A$ is supported in a $K_{1}^{-1}$-neighborhood of the 2-plane where $\eta_3 = 0, \eta_4 = 0$. As a consequence $\widehat{\widetilde{E}f_{A} }$ is supported in a $K_{1}^{-1}$ neighborhood of the 2-plane $$V_A = \{\eta \in \R^5 : \eta = (\eta_1, \eta_2, 0, 0 ,0, 0) \}.$$ Note that $V_A \subset \mathcal{H}$ and therefore we can choose $f$ so that the loss of $K_{1}^{2(\frac{1}{2} - \frac{1}{p})}$ in our first decoupling step is sharp for general $f$. This can be seen for example by taking $f$ so that $\widehat{\widetilde{E}f_{A}}$ is essentially the indicator function of $V_A \cap B^5 (0, 2)$.

One is tempted to now exploit the non-isotropic scaling symmetry $$(\eta_1, \eta_2, \eta_3, \eta_4, \eta_5) \rightarrow (\eta_1, \eta_2, K_1 \eta_3, K_1 \eta_4, K_1 \eta_5) $$ associated to $\mathcal{H}$ and then argue by induction on scales (since such a transformation will map the support of $f_A$ to a cube of side-length $O(1)$ but shrink the size of $Q_A$). This gives a favorable result for each individual $Q$, but remember that $V$ can vary depending on $Q$ and may not even be a vector space. We have not found a way to effectively deal with the contribution of different $V$, mainly because $K_{1}^{-1}$-neighborhoods of different $V$ can intersect in complicated ways and naive estimates give a loss in $K_1$ that is much too large to close the induction. A similar issue arises in higher dimensions when $d$ is odd and $m = \frac{d-1}{2}$.

\end{remark}

\vspace{3mm} 

\begin{remark}\label{rmk:BG} The idea of using a broad-narrow analysis to deduce linear restriction theorems from multilinear restriction theorems dates back to Bourgain and Guth in \cite{BG}. They prove restriction estimates for the (elliptic) paraboloid by using $k$-linear restriction  (\cite{BCT}) in the broad case and an induction procedure in the narrow case. Their argument works in a range of $p$ that is larger than what Tao proved in \cite{T} using bilinear restriction theorems. When $d = 3$ their methods also adapt to the hyperbolic surface $\mathbb{H}$ and prove Theorem \ref{thm:restrictionThm} in this case. If $d \geq 4$ is even their methods also prove Theorem \ref{thm:restrictionThm}, and indeed in even dimensions the result follows from their more general estimates for H\"{o}rmander-type operators with non-degenerate phases. In this case one can avoid any type of induction-on-dimension procedure in the range $p \geq \frac{2(d+2)}{d}$ by directly using the $k$-linear Bennet--Carbery--Tao estimate with $k = \frac{d}{2} + 1$, along with a flat decoupling and induction-on-scales argument. In the narrow case in odd dimensions this procedure is not as effective since one needs to use a smaller $k$. 

Recall that the intersection of $\mathbb{H}$ with a hyperplane can have zero Gaussian curvature. This complicates any induction-on-dimension procedure when compared to the elliptic case, where the intersection of a paraboloid with a hyperplane is a paraboloid of lower dimension. The case $d = 3$ for $\mathbb{H}$ is special since you can only lose curvature if the hyperplane is (almost) parallel to the diagonal $\xi_1 = \xi_2$ or the anti-diagonal $\xi_1 = -\xi_2$. In this case case one can instead exploit non-isotropic scaling symmetries of the operator to close the induction. We have not found a way to carry this argument out in higher dimensions, except in the localized setting summarized at the end of the previous remark. Note that when $d = 3$ there are only two `bad' directions (the diagonal or anti-diagonal), but when $d \geq 4$ there are infinitely many (any direction along the hyperbolic cone $\mathcal{C}$ defined above). This appears to be one of the key differences between the cases $d = 3$ and $d = 5$, for example.

\end{remark}

\begin{remark} In \cite{HiI} Hickman and Iliopoulou prove restriction estimates for generalized extension operators with phases which are smooth perturbations of $x\cdot \xi + t(\xi\cdot M\xi)$. It is likely that the bilinear method in this paper will extend to smooth perturbations of the hyperbolic paraboloid $\mathbb{H}$. Indeed, in \cite{L} Lee proves a generalized version of Theorem \ref{thm:bilinear0} that allows for phases $\phi$ which are smooth perturbations of $ \xi \cdot M\xi $. It is likely that a version of Lemma \ref{lem:caseLemma} holds, with the plane $V$ replaced by an $m$-dimensional manifold determined by $\phi$. Then the rest of the argument would follow as in Section 3, with only minor changes made. We do not pursue the details here.   
\end{remark}

\Addresses

\end{document}